\newcounter{stepnb}
\tikzstyle{nodo}=[circle,draw,fill,inner sep=0pt,minimum size=%
\tikzstyle{infinito}=[circle,inner sep=0pt,minimum size=0mm]
\newtheorem{theorem}{Theorem}[section]
\newtheorem{lemma}[theorem]{Lemma}
\newtheorem{proposition}[theorem]{Proposition}
\newtheorem{corol}[theorem]{Corollary}
\numberwithin{equation}{section}
\newcommand{\R}{\mathbb{R}}
\DeclareMathOperator*{\essup}{ess\,sup}
\newcommand{\ee}{\varepsilon}
\newcommand{\be}{\begin{equation}}
\newcommand{\eq}{\end{equation}}
\newcommand{\weaks}{\stackrel{*}{\rightharpoonup}}
\begin{document}
\title[Nonlocal traffic models with general kernels]{Nonlocal traffic models with general kernels: \\singular limit, entropy admissibility, and convergence rate
}

\author[M.~Colombo]{Maria Colombo}
\address{M.C. EPFL B, Station 8, CH-1015 Lausanne, Switzerland.}
\email{maria.colombo@epfl.ch}
\author[G.~Crippa]{Gianluca Crippa}
\address{G.C. Departement Mathematik und Informatik,
Universit\"at Basel, Spiegelgasse 1, CH-4051 Basel, Switzerland.}
\email{gianluca.crippa@unibas.ch}
\author[E. Marconi]{Elio Marconi}
\address{E.M. EPFL B, Station 8, CH-1015 Lausanne, Switzerland.}
\email{elio.marconi@epfl.ch}
\author[L.~V.~Spinolo]{Laura V.~Spinolo}
\address{L.V.S. IMATI-CNR, via Ferrata 5, I-27100 Pavia, Italy.}
\email{spinolo@imati.cnr.it}
\maketitle
{
\rightskip .85 cm
\leftskip .85 cm
\parindent 0 pt
\begin{footnotesize}
Nonlocal conservation laws (the signature feature being that the flux function depends on the solution through the convolution with a given kernel) are extensively used in the modeling of vehicular traffic. In this work we discuss the singular local limit, namely the convergence of the nonlocal solutions to the entropy admissible solution of the conservation 
law obtained by replacing the convolution kernel with a Dirac delta. 
Albeit recent counter-examples rule out convergence in the general case, in the specific framework of traffic models (with anisotropic convolution kernels) the singular limit has been established under rigid assumptions, i.e. {in the case of the exponential kernel (which entails algebraic identities between the kernel and its derivatives) or under fairly restrictive requirements on the initial datum}.
In this work we obtain general convergence results under assumptions that are entirely natural in view of applications to traffic models, plus a convexity requirement on the convolution kernels. {We then provide a general criterion for entropy admissibility of the limit and a convergence rate. We {also} exhibit a counter-example showing that the convexity assumption is necessary for our main compactness estimate. }

%Draft \today
\medskip

\noindent
{\sc Keywords:} nonlocal-to-local limit, nonlocal conservation laws, singular local limit, traffic models.

\medskip\noindent
{\sc MSC (2020):  35L65}

\end{footnotesize}
}

\section{Introduction}
Consider a family of Cauchy problems for nonlocal conservation laws in the form 
\be \label{e:nlc}
     \left\{
     \begin{array}{ll}
     \partial_t u_\ee + \partial_x [ V(u_\ee \ast \eta_\ee) u_\ee ] = 0 \\
     u_\ee (0, \cdot) = u_0 \\
     \end{array}
     \right. 
     \qquad 
     \text{with $\eta_\ee (x): = \frac{1}{\ee} \eta \left( \frac{x}{\ee}\right).$} 
\eq
In the previous expression, $u_\ee: \R_+ \times \R \to \R$ is the unknown, $\eta: \R \to \R_+$ is a given convolution kernel, the symbol $\ast$ represents the convolution with respect to the $x$ variable only,  $\ee >0$ is a parameter and $V: \R \to \R$ a Lipschitz continuous function. We specify in the following the precise assumptions imposed on $V$, $\eta$ and $u_0$, for the time being we just mention that existence and uniqueness results for~\eqref{e:nlc} can be established under fairly general assumptions, see for instance~\cite{KeimerPflug}. In the present work we are concerned with the singular local limit $\ee \to 0+$: when $\eta_\ee$ converges weakly$^\ast$ in the sense of measures to the Dirac delta, \eqref{e:nlc} formally boils down to the conservation law Cauchy problem
\be \label{e:cl}
       \left\{
     \begin{array}{ll}
     \partial_t u + \partial_x [ V(u) u] = 0 \\
     u (0, \cdot) = u_0 . \\
     \end{array}
     \right. 
\eq
Existence and uniqueness results for so-called entropy admissible solutions of~\eqref{e:cl} date back to Kru{\v{z}}kov~\cite{Kruzkov}. 
{The nonlocal-to-local limit was first addressed by Zumbrun in~\cite{Zumbrun} and Amorim, R. Colombo and Teixeira in~\cite{ACT}}. Zumbrun~\cite{Zumbrun} showed that when $\ee \to 0^+$ the family $u_\ee$ converges to the entropy admissible solution $u$ of~\eqref{e:cl} provided $V(u)=u$, $\eta$ is  an even function and the limit solution  $u$ is regular.  In~\cite{ACT} the authors posed the general convergence question and exhibited numerical experiments suggesting convergence of $u_\ee$ to the entropy admissible solution $u$.  In~\cite{ColomboCrippaSpinolo} the authors provided some counter-examples showing that, in general, the family  $u_\ee$ does not converge to the entropy admissible solution of~\eqref{e:cl}.
% \footnote{\color{red}As a matter of fact one of the counter-examples shows that, given any distributional solution $u$ of~\eqref{e:cl} (non necessarily the entropy admissible one), the family $u_\ee$ cannot converge to $u$, see~\cite[p.1160]{ColomboCrippaSpinolo}}

The analysis in~\cite{ColomboCrippaSpinolo} left open the possibility that convergence holds under specific assumptions. In this respect, a natural target for the investigation of the local limit is the framework of traffic flow models: indeed, in recent years, nonlocal conservation laws in the form~\eqref{e:nlc} have been widely used in the modeling of traffic, see for instance~\cite{BlandinGoatin,ChiarelloGoatin,ColomboGaravelloMercier} and the references therein. In this framework, the local counterpart~\eqref{e:cl} is the by now classical LWR model introduced in~\cite{LW,R}: $u$ in this case represents the density of cars and $V$ their speed. It is then natural to assume that the initial datum $u_0$ satisfies 
\be \label{e:u0}
         u_0 \in L^\infty(\R), \qquad 0 \leq u_0 \leq 1,
\eq
where we have normalized to $1$ the maximum possible car density, corresponding to bumper-to-bumper packing. Also, note that the LWR model postulates that drivers 
choose their speed depending on the car density, and, since the expected reaction to a traffic congestion is deceleration, the standard 
assumptions imposed on the function $V$ are   
\be \label{e:V}
    V \in \mathrm{Lip}(\R), \qquad V' \leq 0 \; \text{on [0, 1]}.
\eq
{The nonlocal convolution term in~\eqref{e:nlc} models the fact that drivers choose their speed based on the density of cars in a suitable neighborhood. }
 The standard hypotheses imposed on $\eta$ are then 
\be \label{e:eta}
      \eta \in L^1(\R) \cap L^\infty (\R), \quad \mathrm{supp} \, \eta \subseteq \R_-, \qquad \eta \ge 0, \qquad \eta \text{ non-decreasing on $\R_-$}, \qquad \int_{\R_-} \eta(x) dx =1. 
\eq
The assumption $\mathrm{supp} \, \eta \subseteq \R_-$ models the fact that drivers choose their speed based on the downstream car density only (i.e. they look forward, not backward), whereas the monotonicity condition takes into account the fact  that they pay more attention to closer vehicles. 

We now focus on the analysis of nonlocal Cauchy problems satisfying~\eqref{e:u0},\eqref{e:V} and~\eqref{e:eta}. 
Under~\eqref{e:V} and~\eqref{e:eta} nonlocal conservation laws~\eqref{e:nlc} enjoy {the maximum principle and, under some more assumptions, propagation of monotonicity}, see~\cite{BlandinGoatin}. Keimer and Pflug~\cite{KeimerPflug2} used these properties to establish convergence in the local limit $\ee \to 0^+$ for monotone initial data.  Bressan and Shen~\cite{BressanShen,BressanShen2} established convergence in the local limit in the case $\eta(x) =  \mathbbm{1}_{]-\infty, 0]}(x) e^x$ and  under the assumption that the initial datum has finite total variation and is bounded away from $0$. A key point of the analysis in both~\cite{BressanShen,KeimerPflug2} is that the total variation $\mathrm{TotVar} \ u_\ee (t, \cdot)$ is a monotone non-increasing function of time. Note furthermore that the assumption that the initial datum has bounded total variation is fairly natural in the conservation laws framework, see~\cite{Dafermos:book}. 
In our previous work~\cite{ColomboCrippaMarconiSpinolo} we established convergence via an Ole\u{\i}nik-type estimate under quite general assumptions on the kernel $\eta$, but requiring that the initial datum $u_0$ is bounded away from $0$ and satisfies a fairly  restrictive one-sided Lipschitz condition% (which implies in particular that $u_0$ is of finite total variation)%; the analysis in~\cite{ColomboCrippaMarconiSpinolo} is again based on total variation estimates
. In~\cite{ColomboCrippaMarconiSpinolo} we also exhibit a counter-example showing that, if the initial datum attains the value $0$ then it may happen that for every $t>0$ 
the total variation $\mathrm{TotVar} \ u_\ee (t, \cdot)$ blows up as $\ee \to 0^+$: this rules out total variation estimates and implies that the convergence proofs in~\cite{BressanShen,ColomboCrippaMarconiSpinolo} cannot extend to general initial data. A way out this obstruction has been recently found by Coclite, Coron, De Nitti, Keimer and Pflug in~\cite{CocliteCoronDNKeimerPflug}: rather than looking at the total variation of $u_\ee$, they show that  the total variation of the convolution term  
\be  \label{e:w}
      w_\ee (t, x) : = u_\ee \ast \eta_\ee (t, x) = \frac{1}{\ee} \int_{x}^{+\infty} \eta \left( \frac{x-y}{\ee} \right) u_\ee (t, y) dy 
\eq
is a monotone non-increasing function of time. This allows to extend the results by Bressan and Shen~\cite{BressanShen,BressanShen2} to initial data attaining the value $0$. Note, however, that the analysis in~\cite{BressanShen,BressanShen2,CocliteCoronDNKeimerPflug} is restricted to the case of the exponential kernel $\eta(x) =  \mathbbm{1}_{]-\infty, 0]}(x) e^x$. The proofs crucially rely on the algebraic identity, with no analogue for general kernels,
\begin{equation}
\label{eqn:algebra}u_\ee = w_\ee - \ee \partial_x w_\ee \qquad \mbox{if } \eta(x) =  \mathbbm{1}_{]-\infty, 0]}(x) e^x, 
\end{equation}
which is used to derive an equation for $w_\ee$ independent of $u_\ee$ and allows to reformulate~\eqref{e:nlc} as a system with relaxation.

Our first main result states that the fact that $\mathrm{TotVar} \ w_\ee (t, \cdot)$ is a non-increasing function of time is true under the sole minimal assumptions~\eqref{e:V} and~\eqref{e:eta} combined with a convexity requirement, namely
\begin{equation}
\label{eqn:convex}
\eta \mbox{ is convex on }  \R_-,
\end{equation}
 that we comment upon in the following. 
\begin{theorem} \label{t:tvb} 
Assume that $u_0$, $V$ and $\eta$ satisfy~\eqref{e:u0},~\eqref{e:V}, \eqref{e:eta} and~\eqref{eqn:convex}, respectively, and let $u_\ee$ be the solution of the Cauchy problem~\eqref{e:nlc} and $w_\ee$  be as in~\eqref{e:w}. If $\mathrm{TotVar} \ u_0 < + \infty$ then 
\be \label{e:tv}
     \mathrm{TotVar} \, w_\ee(t, \cdot) \leq \mathrm{TotVar} \, w_\ee(0, \cdot)  \quad \text{for every $\ee>0$ and a.e. $t>0$}. 
\eq
\end{theorem}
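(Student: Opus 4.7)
The plan is to recast the problem as an evolution equation for $w_\ee$ alone and then exploit the convexity hypothesis~\eqref{eqn:convex} to control the time derivative of $\int |\partial_x w_\ee|\,dx$. Convolving~\eqref{e:nlc} with $\eta_\ee$ yields $\partial_t w_\ee + \partial_x(\eta_\ee \ast G_\ee) = 0$ with $G_\ee := V(w_\ee)\,u_\ee$. Thanks to the monotonicity assumption in~\eqref{e:eta}, the distributional derivative $\eta_\ee'$ splits as $-\lambda\,\delta_0 + \mu_\ee$ with $\mu_\ee \ge 0$ and $\lambda := \eta(0^-)/\ee > 0$, which produces the key pointwise identity
$$\partial_x w_\ee = \lambda\,(u_\ee \ast \rho_\ee - u_\ee), \qquad \rho := \eta'/\eta(0^-),$$
where $\rho$ is a probability density supported on $\R_-$. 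A short calculation then gives the transport-type equation
$$\partial_t w_\ee + V(w_\ee)\,\partial_x w_\ee = -\lambda\, R_\ee, \qquad R_\ee := (V(w_\ee) u_\ee) \ast \rho_\ee - V(w_\ee)(u_\ee \ast \rho_\ee),$$
where $R_\ee$ is the nonlocal commutator between multiplication by $V(w_\ee)$ and convolution with $\rho_\ee$. The structural point to record here is that~\eqref{eqn:convex} is equivalent to $\rho$ being non-decreasing on $\R_-$; this is what will drive the sign analysis below.

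Setting $q := \partial_x w_\ee$, differentiating the transport equation in $x$, multiplying by $\mathrm{sgn}(q)$ and integrating over $\R$, the local flux term vanishes via the chain-rule identity $\mathrm{sgn}(q)\,\partial_x(V(w_\ee)\,q) = \partial_x(V(w_\ee)\,|q|)$, leaving
$$\frac{d}{dt}\,\mathrm{TotVar}\, w_\ee(t, \cdot) = -\lambda \int_\R \mathrm{sgn}(q)\,\partial_x R_\ee\,dx = \lambda \int_\R R_\ee\, d\mathrm{sgn}(q).$$
Since $d\mathrm{sgn}(q)$ places a $+2$-Dirac at each local minimum of $w_\ee(t,\cdot)$ and a $-2$-Dirac at each local maximum, proving~\eqref{e:tv} reduces to the inequality
$$\sum_i R_\ee(t, x_{\max}^i) \;\geq\; \sum_j R_\ee(t, x_{\min}^j).$$

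The hard part will be to establish this sign condition, and it is here that~\eqref{eqn:convex} enters in an essential way. By an integration by parts in the defining formula for $R_\ee$ one obtains the representation
$$R_\ee(t, x) = \int_x^{+\infty} V'(w_\ee(\xi))\, q(\xi)\, \Phi(x, \xi)\, d\xi, \qquad \Phi(x, \xi) := \int_\xi^{+\infty} \rho_\ee(x-z)\, u_\ee(z)\, dz,$$
in which $\Phi \ge 0$, and crucially $\partial_x \Phi \ge 0$ as an immediate consequence of the non-decreasing character of $\rho_\ee$ on $\R_-$ granted by~\eqref{eqn:convex}. Heuristically, since $\rho_\ee$ concentrates its mass near the origin, $R_\ee$ at a local maximum of $w_\ee$ is dominated by the integrand just to its right---where $V'(w_\ee)\,q \ge 0$ because $w_\ee$ decreases past a maximum---and is therefore non-negative, with the opposite sign at local minima. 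Making this precise is the crux of the argument: I would pair adjacent extrema $(x_{\min}^i, x_{\max}^i)$ and exploit $\partial_x \Phi \ge 0$ to control the tails of the integral and show that each pair contributes non-negatively to the global sum. As a technical preliminary I would regularize $u_0$ by convolution to render the distributional computations above classical, then transfer the conclusion to general BV data using the stability of~\eqref{e:nlc} and the lower semicontinuity of the total variation.
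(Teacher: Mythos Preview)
Your setup is correct and matches the paper's: the transport equation $\partial_t w_\ee + V(w_\ee)\,\partial_x w_\ee = -\lambda R_\ee$ is exactly the paper's equation~\eqref{e:ewgen} (its right-hand side is $-\lambda R_\ee$ in your notation), and the observation that~\eqref{eqn:convex} is equivalent to $\rho$ being non-decreasing on $\R_-$ (i.e.\ $\eta''\ge 0$) is the right structural ingredient. The regularization/approximation program in your last paragraph is also in line with what the paper does.

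The genuine gap is at the sign step. After differentiating and multiplying by $\mathrm{sgn}(q)$, you integrate by parts onto $d\,\mathrm{sgn}(q)$ and reduce to a comparison of $R_\ee$ at local maxima and minima. This move is fragile for two reasons. First, it presupposes that $\mathrm{sgn}(q)\in BV$ and that the extrema of $w_\ee(t,\cdot)$ are isolated, which need not hold even for smooth data and which you do not justify. Second---and this is the real issue---the required inequality $\sum_i R_\ee(x_{\max}^i)\ge \sum_j R_\ee(x_{\min}^j)$ is only argued heuristically: ``$\rho_\ee$ concentrates near the origin'' and ``pair adjacent extrema'' are not proofs, and your own representation shows that $R_\ee(x)$ depends on $q$ over the whole half-line $[x,\infty)$, so the sign of the integrand immediately to the right of an extremum does not determine the sign of $R_\ee$ there. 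You yourself flag this as ``the crux of the argument'' and then do not carry it out.

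The paper avoids this difficulty entirely by \emph{not} integrating by parts onto $\mathrm{sgn}(q)$. Instead, it computes $\partial_x R_\ee$ explicitly (producing an $\eta''$ term and an $\eta'$ term), keeps $\mathrm{sgn}(q)$ as a bounded multiplier, and applies Fubini to write $\frac{d}{dt}\int|q| = \ee^{-2}\int u_\ee(t,y)\,\sigma(t,y)\,dy$ with
\[
\sigma(t,y)=\int_{-\infty}^y \tfrac{1}{\ee}\eta''\!\Big(\tfrac{x-y}{\ee}\Big)\Big[\big(V(w_\ee(x))-V(w_\ee(y))\big)\,\mathrm{sgn}(q(x))-\gamma(t,x,y)\Big]\,dx,
\quad
\gamma(t,x,y):=-\int_x^y V'(w_\ee)\,|q|\ge 0.
\]
The elementary bound $|V(w_\ee(x))-V(w_\ee(y))|=\big|\int_x^y V'(w_\ee)\,q\big|\le\gamma(t,x,y)$ together with $\eta''\ge 0$ then gives $\sigma(t,y)\le 0$ \emph{pointwise}, hence $\frac{d}{dt}\int|q|\le 0$ since $u_\ee\ge 0$. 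This is a one-line inequality once the integral is organized correctly, and it bypasses any analysis of extrema. If you want to complete your argument, the cleanest fix is to drop the integration by parts onto $d\,\mathrm{sgn}(q)$ and instead expand $\partial_x R_\ee$ and apply Fubini as above; the pointwise control via $\gamma$ is exactly where the convexity $\eta''\ge 0$ is cashed in.
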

Note that the right-hand side of~\eqref{e:tv} is then easily controlled by the inequality
\be \label{e:tv0}
     \mathrm{TotVar} \ w_\ee (0, \cdot) \leq \mathrm{TotVar} \ u_0,
\eq 
which directly follows from~\eqref{e:w}. 
The proof of Theorem~\ref{t:tvb} is  independent from the one in~\cite{CocliteCoronDNKeimerPflug} since it does not rely on any variant of \eqref{eqn:algebra}, and more delicate since the equation for $w_\ee$ is more involved, see~\eqref{e:ewgen}. Note furthermore that, by relying on~\eqref{e:tv}, it is rather easy (see \S\ref{ss:cp}) to see that, up to subsequences,  
\be \label{e:vali}
      {w_\ee \to u \;\; \text{strongly in $L^1_{\mathrm{loc}}(\R_+ \times \R)$,}} \qquad 
      u_\ee \weaks u \quad \text{weakly$^\ast$ in $L^\infty (\R_+ \times \R)$, }
\eq
for some function $u$ that is a \emph{distributional} solution of the Cauchy problem~\eqref{e:cl}. 
The problem of the entropy admissibility of the limit is {highly} nontrivial and, even in the case of the exponential kernel $\eta(x) =  \mathbbm{1}_{]-\infty, 0]}(x) e^x$ it was  partially left open in~\cite{BressanShen} and treated specifically in~\cite{BressanShen2,CocliteCoronDNKeimerPflug} by relying on formula \eqref{eqn:algebra}. In~\cite{ColomboCrippaMarconiSpinolo} the entropy admissibility follows from an Ole\u{\i}nik-type estimate~\cite{Oleinik} whose proof requires the one-sided Lipschitz condition on the initial datum. Our second main result establishes entropy admissibility of the limit function under very minimal assumptions. 
\begin{theorem} \label{t:entropy}
Assume that $u_0$, $V$ and $\eta$ satisfy~\eqref{e:u0}~\eqref{e:V} and~\eqref{e:eta}, respectively, and let  $w_\ee$  be as in~\eqref{e:w}, where $u_\ee$ is the solution of the Cauchy problem~\eqref{e:nlc}. Consider a sequence $\ee_k \to 0^+$ and assume that  {$w_{\ee_k} \to u$ in $L^1_{\mathrm{loc}}(\R_+ \times \R)$ for some function $u \in L^\infty (\R_+ \times \R)$; } then $u$ is the entropy admissible solution of~\eqref{e:cl}. 
\end{theorem}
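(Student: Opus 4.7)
My plan is to derive a nonlocal entropy identity for $u_\ee$ and pass to the limit using the assumed strong convergence $w_{\ee_k}\to u$ in $L^1_{\mathrm{loc}}$. For a smooth convex entropy $S\colon\R\to\R$ with companion flux $q_S$ determined by $q_S'(u)=S'(u)f'(u)$ where $f(u)=V(u)u$, after a suitable mollification step one multiplies \eqref{e:nlc} by $S'(u_\ee)$ to obtain, with $G(u):=uS'(u)-S(u)$,
\[
\partial_t S(u_\ee)+\partial_x\bigl[V(w_\ee)S(u_\ee)\bigr]+G(u_\ee)\,\partial_x V(w_\ee)\;=\;0.
\]
Exploiting the algebraic identity $q_S(u)-V(u)S(u)=\int_0^u V'(r)G(r)\,dr$, this can be rewritten as
\[
\partial_t S(u_\ee)+\partial_x q_S(u_\ee)\;=\;\partial_x\bigl[(V(u_\ee)-V(w_\ee))S(u_\ee)\bigr]+G(u_\ee)\,\partial_x\bigl[V(u_\ee)-V(w_\ee)\bigr],
\]
which is the nonlocal entropy identity.

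\textbf{Passage to the limit.} I would specialize to the Kru\v{z}kov entropies $S(u)=(u-k)^\pm$ (for which $G(u)=\pm k\,\mathbbm{1}_{\pm(u-k)>0}$) and test against an arbitrary $0\le\varphi\in C_c^\infty((0,T)\times\R)$. The Lipschitz continuity of $V$ together with the hypothesis $w_{\ee_k}\to u$ in $L^1_{\mathrm{loc}}$ gives $V(w_{\ee_k})\to V(u)$ strongly in $L^p_{\mathrm{loc}}$ for every $p<\infty$. By weak-$\ast$ precompactness in $L^\infty$, up to a subsequence one has $S(u_{\ee_k})\weaks \overline{S(u)}$; the convexity of $x\mapsto x^+$ then yields $\overline{(u-k)^+}\ge (u-k)^+$, which supplies the correct sign for the entropy inequality. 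Passing to the limit in the nonlocal entropy identity and combining the bounds for $(u-k)^+$ and $(u-k)^-$ would then produce the Kru\v{z}kov entropy inequality, and Kru\v{z}kov's uniqueness theorem \cite{Kruzkov} identifies $u$ with the entropy admissible solution of \eqref{e:cl}.

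\textbf{Main obstacle.} The hard part is making rigorous sense of $G(u_{\ee_k})\,\partial_x[V(u_{\ee_k})-V(w_{\ee_k})]$ in the limit: the factor $V(u_{\ee_k})$ converges only weakly-$\ast$ while $V(w_{\ee_k})$ converges strongly, and neither has a uniform BV bound under the minimal hypotheses of the present theorem (the convexity assumption \eqref{eqn:convex} of Theorem~\ref{t:tvb} is not in force here). The decisive cancellation comes from the fact that $V(u_\ee)=V(k)$ on the level set $\{u_\ee=k\}$, which annihilates the jump contribution of $\mathbbm{1}_{u_\ee>k}$ across that level set after integration by parts; what remains should then be controllable via dominated convergence and the strong convergence of $V(w_\ee)$ to $V(u)$. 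The entire computation would be made rigorous by mollifying both $u_\ee$ and $w_\ee$ first, performing the identities at the regularized level, and finally sending the mollification parameter to zero while controlling the commutator errors.
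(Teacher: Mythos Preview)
Your approach has a genuine gap, and it is precisely the one you flag as the ``main obstacle'' but then dismiss too quickly. The fundamental difficulty is not the error term $G(u_{\ee_k})\,\partial_x[V(u_{\ee_k})-V(w_{\ee_k})]$; it is the \emph{left-hand side} of your entropy identity. The hypothesis of the theorem gives strong $L^1_{\mathrm{loc}}$ convergence of $w_{\ee_k}$ only, and from this one can deduce $u_{\ee_k}\weaks u$ weakly$^\ast$ in $L^\infty$, but nothing better. Hence $S(u_{\ee_k})$ and $q_S(u_{\ee_k})$ converge only weakly$^\ast$, and their weak limits $\overline{S}$, $\overline{q_S}$ are in general \emph{not} $S(u)$, $q_S(u)$. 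Your claim that the convexity inequality $\overline{(u-k)^+}\ge (u-k)^+$ ``supplies the correct sign'' is incorrect: in the distributional inequality $\int\bigl[S(u)\,\partial_t\phi+q_S(u)\,\partial_x\phi\bigr]\ge 0$ the test function $\partial_t\phi$ has no sign, so an inequality between $\overline{S}$ and $S(u)$ is useless. Without an additional compactness mechanism (compensated compactness, kinetic averaging, or a BV bound, none of which are available here), one cannot identify the weak limits of nonlinear functions of $u_{\ee_k}$.

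The paper sidesteps this obstruction entirely by writing the entropy functional in terms of $w_\ee$ rather than $u_\ee$: one convolves the equation \eqref{e:nlc} with $\eta_\ee$, then multiplies by $\alpha'(w_\ee)$, obtaining an expression for $D^\ee_\alpha(\phi):=\iint[\alpha(w_\ee)\partial_t\phi+\beta(w_\ee)\partial_x\phi]+\int\alpha(w_\ee(0,\cdot))\phi(0,\cdot)$. Since $w_{\ee_k}\to u$ strongly, $\alpha(w_{\ee_k})\to\alpha(u)$ and $\beta(w_{\ee_k})\to\beta(u)$ strongly, so the left-hand side converges to the desired entropy functional $D_\alpha(\phi)$ with no loss. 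The resulting right-hand side involves $u_\ee$ only \emph{linearly} (multiplied by functions of $w_\ee$), so weak--strong products suffice. After some rearrangement it splits into (i) terms of the form $\int_x^\infty\eta_\ee(x-y)\,|g(w_\ee(t,x))-g(w_\ee(t,y))|\,dy$, which vanish in $L^1_{\mathrm{loc}}$ uniformly by equicontinuity of the precompact family $\{w_{\ee_k}\}$ (this is the content of the paper's Lemma in \S\ref{s:en}), and (ii) a signed term $\iint u_\ee\, P_\ee\,\phi$ with $P_\ee\ge 0$, the sign coming from the combination $\eta'\ge 0$, $\alpha''\ge 0$, $V'\le 0$. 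This sign is what replaces the dissipation mechanism you were trying to extract from $G(u_\ee)\,\partial_x V(u_\ee)$, and the point is that it is visible only after shifting the entropy computation from $u_\ee$ to $w_\ee$.
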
 
We explicitly point out that the convexity assumption~\eqref{eqn:convex} is not needed in the statement of Theorem~\ref{t:entropy}. Also, Theorem~\ref{t:entropy} only requires strong $L^1$ compactness rather than total variation bounds, and as such it provides a general entropy admissibility criterion for the limit, which replaces the ad hoc arguments used in specific cases in~\cite{BressanShen2,CocliteCoronDNKeimerPflug,KeimerPflug2} and might be useful in contexts where the $L^1$ compactness is obtained through weaker a priori estimates (see~\cite{ColomboCrippaMarconiSpinolo3} for results in the case of the exponential kernel). The following theorem collects our main convergence results. 
\begin{theorem}\label{c:conv}
Assume that $u_0$, $V$ and $\eta$ satisfy~\eqref{e:u0}~\eqref{e:V} and~\eqref{e:eta}, respectively, and let  $w_\ee$  be as in~\eqref{e:w}, where $u_\ee$ is the solution of the Cauchy problem~\eqref{e:nlc}. Let $u$ be the entropy admissible solution of~\eqref{e:cl}. If \eqref{eqn:convex} holds and $\mathrm{TotVar} \ u_0 < + \infty$,
then~\eqref{e:vali} holds true. 

If furthermore $\eta (\xi) \xi \in L^1(\R)$
%\be \label{e:eta2}
%     \int_\R \eta (\xi) | \xi | d \xi < + \infty
%\eq
then we have the convergence rate 
\be \label{e:rate}
       \|  u(t, \cdot)  - w_\ee (t, \cdot) \|_{L^1(\R) } \leq C(\eta, V) \big[ \ee + \sqrt{\ee t} \big] \mathrm{TotVar} \ u_0 \quad  \text{for every $\ee>0$ and a.e. $t>0$} 
\eq
for a suitable constant $ C(\eta, V)$ only depending on $\eta$ and $V$. 
\end{theorem}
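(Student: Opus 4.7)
The plan is to treat the two parts separately. For the first part, I would combine Theorem~\ref{t:tvb} with Theorem~\ref{t:entropy} via compactness. By~\eqref{e:tv0} and Theorem~\ref{t:tvb}, $\mathrm{TotVar}\, w_\ee(t,\cdot) \le \mathrm{TotVar}\, u_0$ uniformly in $\ee$ and $t$; together with the uniform bound $0 \le w_\ee \le 1$ inherited from the maximum principle on $u_\ee$ through convolution, Helly's theorem yields spatial compactness. Equicontinuity in time follows from $\partial_t w_\ee = -\partial_x\bigl((V(w_\ee) u_\ee) \ast \eta_\ee\bigr)$, whose right-hand side is uniformly bounded in $W^{-1,\infty}$. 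An Aubin--Lions argument then gives, up to subsequences, $w_\ee \to u$ in $L^1_{\mathrm{loc}}(\R_+ \times \R)$; Theorem~\ref{t:entropy} identifies $u$ as the unique entropy admissible solution of~\eqref{e:cl}, and uniqueness promotes subsequential convergence to convergence of the whole family. The weak-$\ast$ convergence $u_\ee \weaks u$ follows from
\[
\int (u_\ee - w_\ee)\varphi\, dx = \int u_\ee(x)\Bigl(\varphi(x) - \int \varphi(x+y)\eta_\ee(y)\,dy\Bigr) dx \to 0 \quad \text{for all } \varphi \in C_c^\infty(\R),
\]
combined with the strong convergence of $w_\ee$ and the uniform $L^\infty$ bound on $u_\ee$.

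For the rate~\eqref{e:rate}, I would run a Kuznetsov doubling-of-variables argument with $w_\ee$ as approximate solution. Convolving the nonlocal equation in~\eqref{e:nlc} with $\eta_\ee$ and rearranging yields
\[
\partial_t w_\ee + \partial_x\bigl(V(w_\ee) w_\ee\bigr) = \partial_x R_\ee, \qquad R_\ee(x) := \int \bigl[V(w_\ee(x)) - V(w_\ee(x-y))\bigr] u_\ee(x-y)\eta_\ee(y)\,dy.
\]
Using Lipschitz continuity of $V$, the $L^\infty$ bound on $u_\ee$, Theorem~\ref{t:tvb}, and the assumption $\eta(\xi)\xi \in L^1(\R)$, a Fubini computation gives $\|R_\ee(t,\cdot)\|_{L^1} \le C(\eta, V)\,\ee\,\mathrm{TotVar}\, u_0$; similarly $\|w_\ee(0,\cdot) - u_0\|_{L^1} \le C(\eta)\,\ee\,\mathrm{TotVar}\, u_0$. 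Since $w_\ee$ is smooth in $x$ with uniformly bounded spatial total variation and satisfies the perturbed conservation law above, the classical Kuznetsov estimate (doubling of variables against a regularized Kru\v{z}kov kernel) yields, after optimizing the spatial regularization scale $\delta \sim \sqrt{\ee t}$,
\[
\|w_\ee(t,\cdot) - u(t,\cdot)\|_{L^1} \le \|w_\ee(0,\cdot) - u_0\|_{L^1} + C\sqrt{t\,\|R_\ee\|_{L^1}\,\mathrm{TotVar}\, u_0},
\]
which combined with the initial error gives~\eqref{e:rate}.

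I expect the main obstacle to be the careful execution of the Kuznetsov step. One has to derive approximate Kru\v{z}kov entropy inequalities for $w_\ee$ from the perturbed conservation law, couple them with the exact entropy inequalities for $u$ through the doubling kernel $\omega_\delta(x-y)\rho_\tau(t-s)$, and then carry out the bookkeeping. The delicate point is that the remainder appears in conservative form $\partial_x R_\ee$, so the $x$-derivative must be transferred onto the Kru\v{z}kov test function via integration by parts: this produces a contribution of order $\|R_\ee\|_{L^1}/\delta$, whereas the entropy solution's BV-modulus contributes a term of order $\delta\,\mathrm{TotVar}\, u_0$. The balance between these two is exactly what produces the square root and the final $\ee + \sqrt{\ee t}$ rate.
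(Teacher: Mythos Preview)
Your treatment of the convergence part is essentially the paper's: uniform BV control on $w_\ee$ from Theorem~\ref{t:tvb}, compactness, identification of the limit via Theorem~\ref{t:entropy}, and the weak-$\ast$ convergence of $u_\ee$ by duality. The only cosmetic difference is that the paper bounds $\partial_t w_\ee$ directly in $L^1_{t,x}$ via the material-derivative identity~\eqref{e:ewgen} rather than invoking Aubin--Lions.

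For the rate there is a real gap. Writing $\partial_t w_\ee + \partial_x(V(w_\ee)w_\ee) = \partial_x R_\ee$ (your $R_\ee$ is the paper's $S_\ee$) and then invoking ``the classical Kuznetsov estimate'' hides the passage from the \emph{equation} to approximate \emph{entropy inequalities}. Since $w_\ee$ is Lipschitz, multiplying by $\alpha'(w_\ee)$ and integrating against $\phi\ge 0$ gives, after one integration by parts,
\[
D^\ee_\alpha(\phi) \;=\; \iint R_\ee\,\alpha'(w_\ee)\,\partial_x\phi\,dx\,dt \;+\; \iint R_\ee\,\alpha''(w_\ee)\,(\partial_x w_\ee)\,\phi\,dx\,dt.
\]
The first integral is indeed controlled by $\|R_\ee\|_{L^1_x}\cdot\|\partial_x\phi\|_\infty$ and produces the $O(\ee/\delta)$ contribution you describe. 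The second integral, however, carries no $\partial_x\phi$: since $\|R_\ee\|_{L^\infty}=O(1)$ (the Lipschitz constant of $w_\ee$ is $O(1/\ee)$ by~\eqref{e:maxz}, so the factor $\ee$ from $\int|\xi|\eta(\xi)\,d\xi$ is cancelled) and $\int|\partial_x w_\ee|\,dx\le\mathrm{TotVar}\,u_0$, this term is $O(t\,\mathrm{TotVar}\,u_0)$ in the Kuznetsov balance, not $O(\ee t/\delta)$, and the optimization $\delta\sim\sqrt{\ee t}$ no longer closes.

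The paper deals with precisely this term in Proposition~\ref{p:quasisolutions}, whose proof recycles the computations behind Theorem~\ref{t:entropy}: one rewrites $\iint R_\ee\,\alpha''(w_\ee)\,\partial_x w_\ee\,\phi$ as $\iint u_\ee\,(G^1_\ee+G^2_\ee+P_\ee)$ and shows $P_\ee\ge 0$ pointwise, using $\eta'\ge 0$, $V'\le 0$, $u_\ee\ge 0$ and the convexity of $\alpha$ through the function $H$ in~\eqref{e:acca}. Dropping the nonnegative $P_\ee$ then yields the one-sided bound $D^\ee_c(\phi)\ge -\iint E_\ee|\partial_x\phi|$ with $\|E_\ee(t,\cdot)\|_{L^1}\le K\ee\,\mathrm{TotVar}\,u_0$, after which the Kuznetsov doubling runs exactly as you outline. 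In short, the same structural sign that drives entropy admissibility is what makes the rate work; it is not merely bookkeeping.
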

The convergence result in the above statement is a consequence of Theorem~\ref{t:tvb} and Theorem~\ref{t:entropy}, whereas to establish the convergence rate~\eqref{e:rate} we provide a fairly precise estimate on the entropy dissipation rate of nonlocal solutions (see Proposition~\ref{p:quasisolutions}) and then rely on an argument due to Kuznetsov~\cite{Kuznetsov}. 

To conclude we are left to discuss the role of the convexity assumption \eqref{eqn:convex}, which is the sole hypothesis in our convergence results Theorem~\ref{t:tvb} and Theorem~\ref{c:conv} that is not entirely standard in traffic models. It turns out that, if the convexity assumption \eqref{eqn:convex} is violated then the key estimate~\eqref{e:tv} fails in general. 
\begin{theorem}\label{t:cex}
Assume $V(w) : = 1-w$ and $\eta  : = \mathbbm{1}_{]-1, 0[}$; then for every sequence $\{ \ee_n \}$ satisfying 
\begin{equation} \label{e:ticino}
             \ee_n >0, \quad \ee_{n+1} \leq  \frac{1}{16} \ee_n \quad \text{ for every $n \in \mathbb N$}
\end{equation}
the following holds. There are an initial datum $u_0$ satisfying~\eqref{e:u0} and such that $\mathrm{TotVar} \ u_0~<~+ \infty$ and a sequence 
$\{ t_n \}$ such that for every $n \in \mathbb N$
\begin{equation} \label{e:gennaio}
          t_n >0, \qquad \mathrm{TotVar} \, w_{\ee_n} (t, \cdot) >   \mathrm{TotVar} \, w_{\ee_n} (0, \cdot)
         \quad \text{for a.e. $t \in \ ]0,  t_n[ $}. 
\end{equation}   
\end{theorem}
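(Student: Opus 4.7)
\textbf{Proof plan for Theorem~\ref{t:cex}.}

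\emph{Strategy.} The plan is to produce a unit-scale counter-example for~\eqref{e:nlc} with $\ee=1$, $V(w)=1-w$, $\eta=\mathbbm{1}_{(-1,0)}$ and to assemble $u_0$ as a superposition of rescaled, translated and amplitude-damped copies of it, one per scale $\ee_n$. By the scaling invariance of~\eqref{e:nlc}, if $\bar u$ solves at unit scale with datum $\bar u_0$, then $\bar u(t/\ee,(x-x_0)/\ee)$ solves at scale $\ee$ with datum $\bar u_0((\cdot-x_0)/\ee)$, and $\mathrm{TotVar}\,w_\ee$ is just the time-rescaled $\mathrm{TotVar}\,\bar w$.

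\emph{Unit scale.} Convolving~\eqref{e:nlc} with $\eta$ and using $\eta\ast\partial_x f=f(\cdot+1)-f(\cdot)$ yields
\begin{equation*}
\partial_x w_1(t,x)=u_1(t,x+1)-u_1(t,x),\qquad \mathrm{TotVar}\,w_1(t,\cdot)=\int_\R|u_1(t,x+1)-u_1(t,x)|\,dx.
\end{equation*}
I would exhibit an explicit piecewise constant $\bar u_0\colon\R\to[0,\delta]$, $\delta$ small, with a few jumps at mutual distance close to $1$---the support length of $\eta$---so that the non-convexity of $\eta$ at both endpoints of its support enters the formula above through the coupling of $u_1(t,x)$ with $u_1(t,x+1)$ in a constructive way. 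Tracking to first order in $t$ both the translation of the discontinuities of $u_1$ at speed $V(w_1)=1-w_1$ and the interior self-evolution $\partial_t u_1=u_1\,\partial_x w_1$ and substituting into the integral, one verifies a bound $\mathrm{TotVar}\,w_1(t,\cdot)\ge \mathrm{TotVar}\,w_1(0,\cdot)+c_0 t$ on some interval $(0,\bar t)$, with $c_0>0$. Note that the simplest candidate $\bar u_0=\delta\,\mathbbm{1}_{(0,1)}$ instead yields $\mathrm{TotVar}\,w_1(t,\cdot)=2\delta-2\delta^2 t+o(t)$, a TV \emph{decrease}; so the construction is not automatic and genuinely requires interacting jumps.

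\emph{Superposition.} Set
\begin{equation*}
u_0:=\sum_{n\ge 1}\lambda_n\,\bar u_0\!\left(\frac{\cdot-x_n}{\ee_n}\right),
\end{equation*}
with summable amplitudes $\lambda_n>0$ and centers $x_n\in[0,1]$ chosen so that the supports $[x_n,x_n+\ell\ee_n]$ are pairwise disjoint with mutual separation much larger than $\ee_n$; the geometric decay $\ee_{n+1}\le\ee_n/16$ makes this packing possible. Then $0\le u_0\le 1$ for $\lambda_1\delta$ small and $\mathrm{TotVar}\,u_0\le\mathrm{TotVar}\,\bar u_0\sum_n\lambda_n<\infty$. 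By finite speed of propagation and the compact support of $\eta_{\ee_n}$, for $t\in(0,t_n)$ with $t_n:=c_1\ee_n$ small, the values of $w_{\ee_n}(t,\cdot)$ in a neighborhood of $x_n$ depend only on the $n$-th bump, so the strict TV-increase from the unit-scale example transfers there through scale and amplitude rescaling, yielding a contribution of order $\lambda_n^2 c_0 t/\ee_n$ with positive sign. The off-scale contributions from the other bumps are either geometrically small (when $m>n$, since $\ee_m\ll\ee_n$ forces the $w_{\ee_n}$-tent heights to shrink like $\ee_m/\ee_n$) or contribute constant-in-time TV at leading order (when $m<n$, since a bump wide relative to $\ee_n$ has flat interior regions where $\partial_x w_{\ee_n}=0$, hence $\partial_t u=0$, while its jumps at the larger scale $\ee_m$ locally look like Riemann problems at scale $\ee_n$ whose TV is preserved). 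Choosing $\lambda_n$ with sufficient decay then guarantees~\eqref{e:gennaio}.

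\emph{Main obstacle.} The critical difficulty is the unit-scale sign verification: producing an $\bar u_0$ whose leading-order $t$-derivative of the TV integral above is strictly positive. Simple one-bump candidates yield TV decrease, so the construction demands a careful geometric arrangement of interacting jumps at distance $\approx 1$ that forces the two discontinuities of $\eta$ to cooperate rather than cancel. Once this unit-scale counter-example is secured, the rescaling and superposition steps, while requiring a delicate cross-scale bookkeeping, follow from standard locality and stability for nonlocal conservation laws.
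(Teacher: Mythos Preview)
Your overall architecture---one bump per scale, superposition with disjoint supports, scaling invariance to transfer a unit-scale computation---is exactly the skeleton of the paper's construction. But two points separate your plan from a proof, and the first one is a genuine gap.

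\textbf{The unit-scale example is not there.} You write ``I would exhibit an explicit piecewise constant $\bar u_0$\dots'' and then identify this as the ``critical difficulty''---but you never produce it. This is the entire content of the theorem; the superposition machinery is routine once the single-scale mechanism is in hand. The paper's mechanism is \emph{not} a compactly supported small-amplitude bump: it is the pair of rectangles $v_{h\ell}$ \emph{together with} the half-line $\mathbbm{1}_{]0,+\infty[}$. The half-line is essential. At $\ee=4\ell$ it forces $w_\ee(0,-2\ell)\ge 1/2$ while $w_\ee(0,-6\ell)=h/4$, so the characteristic through $-6\ell$ is much faster than the one through $-2\ell$; this differential speed is what makes $\partial_x w_\ee$ go negative. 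Equally important, the half-line makes $w_\ee(0,\cdot)$ \emph{monotone} (Lemma~\ref{l:marzo}), so that \emph{any} negative slope is an immediate TV increase via~\eqref{e:formula}. With a compactly supported $\bar u_0$ bounded by a small $\delta$, all characteristic speeds are $1+O(\delta)$ and there is no large background to play against; whether a purely localized arrangement can still force the first-order coefficient to be positive is not obvious and is not something you have checked. Your own observation that the single rectangle gives TV \emph{decrease} shows the sign is delicate.

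\textbf{The amplitude decay fights the increase.} Because $\sum\lambda_n<\infty$ you need $\lambda_n\to 0$, and you correctly note the $n$-th bump's TV-increase rate scales like $\lambda_n^2$. But the cross-scale error terms you have to absorb (the $o(t)$ from each of the $n-1$ large bumps, the evolution of the small bumps' tents) do not obviously scale the same way, so the balance ``$n$-th bump wins'' is not automatic. The paper sidesteps this entirely: the amplitudes $h_n$ need only be summable and are not forced to be small, because the half-line background does the heavy lifting. All smaller bumps are nested near the origin and absorbed into the monotone region (they are the perturbation $s$ in Lemmas~\ref{l:marzo}--\ref{l:maggio}), while the larger bumps are handled by Lemma~\ref{l:bfrombelow}, which gives a clean lower bound $4h_k+o(t)$ matching the initial value $4h_k$ exactly.

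In short: the plan is structurally close to the paper's but is missing the one non-routine ingredient. If you want to carry it through without the $\mathbbm{1}_{]0,+\infty[}$ trick, you must actually build the compactly supported $\bar u_0$ and verify the sign; otherwise, adopt the paper's idea of anchoring everything to a half-line background, which removes both obstacles at once.
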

Some remarks are here in order. First, the function $V(w) =1-w$ satisfies~\eqref{e:V} and the function~$\eta~=~\mathbbm{1}_{]-1, 0[}$ satisfies~\eqref{e:eta}, but does not satisfy the convexity condition~\eqref{eqn:convex}.  Second, the counter-example given in Theorem~\ref{t:cex} is in contrast with numerical evidence provided in~\cite[\S5]{CocliteCoronDNKeimerPflug} and~\cite[p.~1949]{KeimerPflug2} which suggested that, when  $V(w) : = 1-w$ and $\eta  : = \mathbbm{1}_{]-1, 0[}$, $\mathrm{TotVar} \, w_{\ee_n} (t, \cdot)$ is a monotone non-increasing function of time.
{The numerical elusiveness is mainly due to the need of a specific choice of initial datum to enforce \eqref{e:gennaio} rather than to the numerical viscosity issue discussed in~\cite{ColomboCrippaGraffSpinolo}}. 
Our choice of initial datum is through an explicit formula, see~\eqref{e:idcex}, which depends on the chosen sequence $\{ \ee_n \}$. Third, we are reasonably confident that the basic ideas of our counter-example are not restricted to the case $\eta   = \mathbbm{1}_{]-1, 0[}$ and could be extended to a rather general class of kernels violating the convexity condition~\eqref{eqn:convex} but, {for simplicity}, we do not pursue this direction here. Finally, we refer to~\S\ref{ss:out} for an heuristic presentation of the counter-example. 
\subsection*{Paper outline}
In \S\ref{s:p} we overview some known preliminary results, in \S\ref{s:ptv}, \S\ref{s:en},  \S\ref{s:pc} and  \S\ref{s:cex} we establish Theorems~\ref{t:tvb},~\ref{t:entropy},~\ref{c:conv} and~\ref{t:cex}, respectively. For the reader's convenience we conclude the introduction by collecting the main notation used in paper. 
\subsection*{Notation}
\begin{itemize}
\item $\mathbbm{1}_E:$ the characteristic function of the measurable set $E$, i.e. $\mathbbm{1}_E(x) 
=1$ if $x \in E$, $\mathbbm{1}_E(x) 
=0$ if $x \notin E$;
\item $u\ast \eta$: the convolution of $u$ and $\eta$, computed with respect to the $x$ variable only, in other words  
$[u \ast \eta] (t, x)~=~\int_\R \eta (x-y) u (t, y) dy$;
\item $\mathrm{TotVar} \, u_0$: the total variation of the function $u_0$, see~\cite[\S3.2]{AmbrosioFuscoPallara}; 
\item $\R_-$, $\R_+$: the negative and the positive real axes, respectively, i.e. $\R_-=]-\infty, 0]$, $\R_+ = [0, + \infty[$;
\item $L^1_{\mathrm{loc}}(\R_+ \times \R)$: the space of functions $u$ such that $u \in L^1 (\Omega)$ for every open bounded set $\Omega \subseteq \R_+ \times \R$; 
\item $\mathrm{Lip}$: the space of Lipschitz continuous functions;
\item $C^{1, 1}$: the space of continuously differentiable functions with Lipschitz continuous derivatives;
\item $o(t)$ as $t \to 0$: the Bachmann-Landau notation for any function $g$ such that $\lim_{t \to 0} g(t)/t =0$.
\end{itemize}
\section{Preliminary results} \label{s:p}
Existence and uniqueness results for nonlocal conservation laws with anisotropic convolution kernels have been obtained in several works under different assumptions, see for instance \cite{BlandinGoatin,BressanShen,ChiarelloGoatin,CocliteDNKeimerPflug}. We now very slightly extend
\cite[Corollary 2.1]{CocliteDNKeimerPflug}
\begin{proposition} \label{p:exuni}
Assume that $u_0$, $V$ and $\eta$ satisfy~\eqref{e:u0},~\eqref{e:V} and~\eqref{e:eta}, respectively; then
\begin{itemize}
\item[i)] there is a unique distributional solution $u_\ee \in L^\infty(\R_+ \times \R)$ of the Cauchy problem~\eqref{e:nlc}. Also, 
\be \label{e:maxpu}
      0 \leq u_\ee \leq 1.
\eq
\item[ii)] $u_\ee \in C^0(\R_+, L^1_{\mathrm{loc}}(\R))$, namely the function $u_\ee$ admits a representative such that the map $t \mapsto u_\ee (t, \cdot)$ is continuous from $\R_+$ to $L^1_{\mathrm{loc}} (\R)$ endowed with the strong topology.
\item[iii)] If $u_0 \in \mathrm{Lip}(\R)$, then $u_\ee \in \mathrm{Lip}(\R_+ \times \R)$.
\end{itemize}
\end{proposition}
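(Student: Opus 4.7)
The plan is to deduce Proposition~\ref{p:exuni} from \cite[Corollary 2.1]{CocliteDNKeimerPflug} by a routine approximation and stability argument, the cited result presumably holding under marginally stronger regularity requirements on $\eta$ (for instance $\eta \in C^1$ or $W^{1,\infty}$) than our minimal hypothesis~\eqref{e:eta}. The heart of the matter is thus a quantitative $L^1_{\mathrm{loc}}$ stability estimate with respect to the convolution kernel, which will allow us to pass to the limit in the approximation.

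First, I would construct an approximating family $\eta^\delta \in C^1(\R)$ by mollifying $\eta$ with a smooth mollifier supported in $\R_+$ (so that the convolution shifts mass leftwards and thus preserves $\mathrm{supp}\,\eta^\delta \subseteq \R_-$), then normalising to keep $\int \eta^\delta = 1$. Because translation from the left and mollification preserve monotonicity on $\R_-$ and nonnegativity, each $\eta^\delta$ still satisfies~\eqref{e:eta}, and $\eta^\delta \to \eta$ in $L^1(\R)$. Applying \cite[Corollary 2.1]{CocliteDNKeimerPflug} to the data $(\eta^\delta, u_0)$ (or further approximating $u_0$ by Lipschitz $u_0^\delta$ if needed) yields solutions $u_\ee^\delta$ of~\eqref{e:nlc} with $0 \le u_\ee^\delta \le 1$, the maximum principle arising at the approximate level from the transport form $\partial_t u + V(w) \partial_x u = -V'(w)\partial_x w\, u$.

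The main technical step is a stability estimate of the following type: for two regularised kernels $\eta^{\delta_1}, \eta^{\delta_2}$ one proves
\[
\|u_\ee^{\delta_1}(t,\cdot) - u_\ee^{\delta_2}(t,\cdot)\|_{L^1(B_R)} \le C(t,R,\ee)\,\|\eta^{\delta_1} - \eta^{\delta_2}\|_{L^1(\R)}
\]
by writing the equation satisfied by the difference, using that the drifts $V(w_\ee^{\delta_i})$ are uniformly Lipschitz in $x$ (since $\eta$ is BV by~\eqref{e:eta}, so $\partial_x w_\ee^\delta = u_\ee^\delta \ast \partial_x \eta_\ee^\delta$ is uniformly bounded), and closing a Gronwall inequality on $L^1_{\mathrm{loc}}$. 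This gives the Cauchy property of $u_\ee^\delta$ in $C^0(\R_+, L^1_{\mathrm{loc}}(\R))$, hence existence of a distributional solution together with~\eqref{e:maxpu} and the time continuity (ii); uniqueness in (i) follows from the same estimate applied to two solutions with identical data.

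For (iii), assuming $u_0 \in \mathrm{Lip}(\R)$, I would invoke the method of characteristics: define the flow $X(t,y)$ by $\dot X = V(w_\ee(t,X))$, $X(0,y)=y$. Since $w_\ee$ and $\partial_x w_\ee$ are Lipschitz in $x$ uniformly in $t$ (thanks to the BV regularity of $\eta$ and the uniform $L^\infty$ bound on $u_\ee$), the Cauchy–Lipschitz theorem yields a bi-Lipschitz flow, and the ODE representation $u_\ee(t,X(t,y)) = u_0(y)\exp\!\big(-\!\int_0^t V'(w_\ee)\partial_x w_\ee\,ds\big)$ transfers the Lipschitz regularity of $u_0$ to $u_\ee$ in both variables. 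The only nontrivial step is really the stability estimate of Step~2; everything else follows from standard transport-equation technology together with the cited well-posedness result.
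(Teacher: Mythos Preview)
You have misidentified what the ``slight extension'' of \cite[Corollary 2.1]{CocliteDNKeimerPflug} actually is. That result already covers kernels satisfying only~\eqref{e:eta}; no mollification of $\eta$ is needed. The genuine gap is elsewhere: \cite[Corollary 2.1]{CocliteDNKeimerPflug} proves uniqueness in the class $L^\infty(\R_+\times\R)\cap C^0(\R_+,L^1_{\mathrm{loc}}(\R))$, whereas item~i) of the proposition asserts uniqueness in the larger class $L^\infty(\R_+\times\R)$. Your approximation-and-stability scheme, even if it works, produces a solution lying in $C^0(\R_+,L^1_{\mathrm{loc}})$ and establishes uniqueness among solutions obtained this way, but it does not rule out the existence of another bounded distributional solution that a priori lacks time-continuity. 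Your Gronwall estimate is formulated for $\|u^{\delta_1}_\ee(t,\cdot)-u^{\delta_2}_\ee(t,\cdot)\|_{L^1(B_R)}$, which presupposes that both comparands have a well-defined trace at each time $t$; this is precisely what is not granted for an arbitrary $L^\infty$ distributional solution.

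The paper's argument closes this gap differently: rather than proving stability directly, it observes that any bounded distributional solution of~\eqref{e:nlc} is a bounded distributional solution of a linear continuity equation with Lipschitz velocity field $V(w_\ee)$ (recall $w_\ee\in\mathrm{Lip}$ by Corollary~\ref{c:w}). One then invokes the renormalization property of DiPerna--Lions (as in \cite[Corollary 3.14]{DL:note} and \cite[Proposition 2.3]{ColomboCrippaSpinolo}) to conclude that any such solution admits a representative in $C^0(\R_+,L^1_{\mathrm{loc}}(\R))$, which is exactly item~ii). Once ii) is established for \emph{every} bounded distributional solution, the uniqueness statement of \cite{CocliteDNKeimerPflug} applies and yields i). Your treatment of iii) via the characteristic flow is essentially the same as the paper's.
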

\begin{proof}
Concerning items i) and ii), the only difference with respect to~\cite[Corollary 2.1]{CocliteDNKeimerPflug} is that in that paper uniqueness is established in the more restrictive class $u_\ee \in L^\infty(\R_+ \times \R) \cap C^0(\R_+, L^1_{\mathrm{loc}}(\R))$. However, one can use the same argument as in the proof of ~\cite[Corollary 3.14]{DL:note} and~\cite[Proposition 2.3]{ColomboCrippaSpinolo} and show that, owing to renormalization, any bounded distributional solution of~\eqref{e:nlc} satisfies ii). Item iii) is a straightforward consequence of the analysis in~\cite{CocliteDNKeimerPflug} and of classical results concerning the method of characteristics. 
\end{proof}
As an easy consequence of Proposition~\ref{p:exuni} we get 
\begin{corol}
\label{c:w}
Assume that $u_0$, $V$ and $\eta$ satisfy~\eqref{e:u0},~\eqref{e:V} and~\eqref{e:eta}, respectively, and that $w_\ee$ is given by~\eqref{e:w}, where $u_\ee$ is the 
solution of the Cauchy problem~\eqref{e:nlc}. Then 
\begin{itemize}
\item[i)] $w_\ee \in \mathrm{Lip} (\R_+ \times \R)$ and 
\be \label{e:maxpw}
     0 \leq w_\ee \leq 1;
\eq
\item[ii)] $w_\ee \in C^0(\R_+, L^1_{\mathrm{loc}}(\R))$;
\item[iii)] if $u_0 \in \mathrm{Lip}(\R)$, then $w_\ee \in C^{1,1}(\R_+ \times \R)$.
\end{itemize}
\end{corol}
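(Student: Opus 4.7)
The plan is to deduce all three items directly from Proposition~\ref{p:exuni} combined with the explicit formula~\eqref{e:w}.

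For item i), the pointwise bounds $0\leq w_\ee\leq 1$ are immediate from the maximum principle~\eqref{e:maxpu} together with $\eta_\ee\geq 0$ and $\int_\R \eta_\ee\,dx=1$. For Lipschitz regularity, the key observation is that $\eta_\ee$ is itself a BV function on $\R$: since $\eta$ is non-decreasing on $\R_-$ and vanishes on $\R_+$, one computes $\mathrm{TotVar}(\eta_\ee)=2\eta(0^-)/\ee<+\infty$. The standard translation estimate for BV functions then yields
\[
|w_\ee(t,x)-w_\ee(t,x')|\leq \|u_\ee\|_\infty\,\|\eta_\ee(\cdot-(x-x'))-\eta_\ee(\cdot)\|_{L^1}\leq \|u_\ee\|_\infty\,\mathrm{TotVar}(\eta_\ee)\,|x-x'|,
\]
which gives the spatial Lipschitz bound. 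For the time variable I would use the nonlocal equation~\eqref{e:nlc} and integrate by parts in space against the BV function $\eta_\ee(x-\cdot)$ to obtain
\[
w_\ee(t,x)-w_\ee(s,x)=\int_s^t\int_\R V(w_\ee(\tau,y))\,u_\ee(\tau,y)\,d_y\eta_\ee(x-y)\,d\tau,
\]
and bound this by $\|V\|_\infty\,\|u_\ee\|_\infty\,\mathrm{TotVar}(\eta_\ee)\,|t-s|$.

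Item ii) is an immediate consequence of the time regularity established in item i).

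For item iii), the extra assumption $u_0\in\mathrm{Lip}(\R)$ upgrades $u_\ee$ to a Lipschitz function on $\R_+\times\R$ by Proposition~\ref{p:exuni} iii). I would then compute the distributional derivatives of $w_\ee$ explicitly, splitting $\partial_x\eta_\ee$ into its absolutely continuous and jump parts. For instance
\[
\partial_x w_\ee(t,x)=\int_{-\infty}^0 \eta_\ee'(z)\,u_\ee(t,x-z)\,dz-\eta_\ee(0^-)\,u_\ee(t,x);
\]
the first summand is Lipschitz in $(t,x)$ as convolution of the Lipschitz function $u_\ee$ with the $L^1$ density $\eta_\ee'\mathbbm{1}_{\R_-}$, and the second is Lipschitz because $u_\ee$ is. Rewriting $\partial_t w_\ee$ via the equation produces an analogous decomposition whose summands are Lipschitz by the same reasoning, yielding $w_\ee\in C^{1,1}(\R_+\times\R)$. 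The only delicate point throughout is the Dirac mass generated by the jump of $\eta_\ee$ at the origin; this, however, simplifies rather than complicates the bookkeeping, since it only produces pointwise evaluations of $u_\ee$ that automatically inherit the regularity of $u_\ee$ itself.
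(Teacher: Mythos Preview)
Your proposal is correct and close in spirit to the paper's argument, but the route for item~i) differs. The paper computes the partial derivatives of $w_\ee$ explicitly, obtaining the formulas
\[
\partial_x w_\ee(t,x)=\frac{1}{\ee}\Big[\frac{1}{\ee}\int_x^{+\infty}\eta'\Big(\frac{x-y}{\ee}\Big)u_\ee(t,y)\,dy-\eta(0^-)u_\ee(t,x)\Big]
\]
and an analogous one for $\partial_t w_\ee$ (obtained from the equation), and then bounds each term pointwise using $0\le u_\ee\le 1$; this gives the sharper constant $|\partial_x w_\ee|\le \eta(0^-)/\ee$ by exploiting the signs of the two summands. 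Your translation/BV argument for $\eta_\ee$ yields the same conclusion with constant $2\eta(0^-)/\ee$, which is perfectly sufficient here. The advantage of the paper's approach is that the explicit formulas for $\partial_x w_\ee$ and $\partial_t w_\ee$ are reused later (they combine to give the material-derivative identity~\eqref{e:ewgen}, which is the starting point of the total-variation estimate), so deriving them at this stage is economical. Your argument, on the other hand, makes transparent that only $\eta_\ee\in BV$ is needed for the Lipschitz bound.

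For item~ii) you deduce $C^0(\R_+,L^1_{\mathrm{loc}})$ from the uniform-in-$x$ time-Lipschitz bound of item~i); the paper instead quotes item~ii) of Proposition~\ref{p:exuni}. Both are valid. For item~iii) your decomposition is exactly the paper's formula~\eqref{e:wx} (and the analogous~\eqref{e:wt}), so the arguments coincide.
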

\begin{proof}
By combining~\eqref{e:eta},~\eqref{e:w} and~\eqref{e:maxpu} we get the maximum principle~\eqref{e:maxpw}. By differentiating~\eqref{e:w} we get 
\be
\label{e:wx}
     \frac{\partial w_\ee}{\partial x} = \frac{1}{\ee} \left[ \frac{1}{\ee} \int_x^{+ \infty} \eta'\left( \frac{x-y}{\ee} \right) u_\ee (t, y) dy - \eta(0^-) u_\ee (t, x)  \right]
\end{equation}
and 
\begin{equation}
\label{e:wt}
\begin{split}
           \frac{\partial w_\ee}{\partial t} & = -  \frac{1}{\ee} \int_{x}^{+\infty}  \eta \left( \frac{x-y}{\ee} \right) \partial_y [ u_\ee V(w_\ee) ]
            (t, y) dy\\
            & =    \frac{1}{\ee}\left[ \eta(0^-) u_\ee V(w_\ee)(t, x) - \frac{1}{\ee} \int_{x}^{+\infty}  \eta' \left( \frac{x-y}{\ee} \right)  u_\ee V(w_\ee)
          (t, y) dy \right].
\end{split}
\end{equation}
In the previous expression, $\eta(0^-)$ denotes the left limit of $\eta$ at $0$, which is well-defined since the restriction of $\eta$ to $\R_-$ is a monotone function. By combining~\eqref{e:wx} and~\eqref{e:wt} and using~\eqref{e:maxpu} and~\eqref{e:maxpw} we get 
\be \label{e:maxz}
     |\partial_x w_\ee (t, x)| \leq \frac{1}{\ee} \eta(0^-) , \quad 
      |\partial_t w_\ee (t, x)| \leq \frac{1}{\ee} \eta(0^-) \max_{w \in [0, 1]} |V(w) |
\quad \text{for every $(t, x) \in \R_+ \times \R$},
\eq
that is $w_\ee \in \mathrm{Lip} (\R_+ \times \R)$. Item ii) in the statement of Corollary~\ref{c:w} follows from item ii) of Proposition~\ref{p:exuni}. To establish item iii) we recall item iii) in the statement of Proposition~\ref{p:exuni} and use again~\eqref{e:wx} and~\eqref{e:wt}. 
\end{proof}
{We now recall a couple of well-known facts concering functions of bounded total variation that we need in the following. Assume $v \in L^\infty (\R)$ satisfies $\mathrm{Tot Var}\ v <+ \infty$; then 
\be \label{e:tvchar}
      \int_\R | v(x -  \xi)  - v(x)|  dx \leq |\xi| \mathrm{TotVar} \ v \quad \text{for every $\xi \in \R$}.
\eq
Also, let $\rho \in L^1(\R)$ satisfy 
\be \label{e:crho}
     C_\rho: = \int_\R |\rho (\xi) \xi | d\xi <+ \infty, \qquad \int_\R \rho(\xi) d \xi = 1 
\eq
and set $\rho_h (\xi) : = h^{-1}\rho (h^{-1} \xi)$; then 
\be \label{e:tvl1}
      \|  v \ast \rho_h - v \|_{L^1(\R)} \leq h \ C_\rho \mathrm{Tot Var}\ v
     \quad \text{for every $h>0$}. 
\eq
To conclude, we point out that, if  $\eta$ satisfies~\eqref{e:eta} then 
\be \label{e:etader}
\begin{split}
     \int_\R | \eta'(\xi) \xi| d\xi & = \lim_{R \to + \infty}  
      \int_{-R}^0 | \eta'(\xi) \xi| d\xi \stackrel{\eta'\ge 0}{=}
   -    \lim_{R \to + \infty}  \int_{-R}^0 \eta'(\xi) \xi d\xi \\&
    \stackrel{\text{integration by parts}}{=}
 - \lim_{R \to + \infty} R \eta(-R) + \lim_{R \to + \infty} 
    \int_{-R}^0 \eta(\xi) d\xi \leq  \lim_{R \to + \infty} 
    \int_{-R}^0 \eta(\xi) d\xi \stackrel{\eqref{e:eta}}{\leq} 1.
\end{split}
\eq}
\section{Proof of Theorem~\ref{t:tvb}} \label{s:ptv}
By combining~\eqref{e:wx} and~\eqref{e:wt} we get that the material derivative of $w_\ee$ is given by 
\begin{equation}
\label{e:ewgen}
      \partial_t w_\ee + V(w_\ee) \partial_x w_\ee = 
      \frac{1}{\ee^2} \int_x^{+\infty} \eta' \left( \frac{x-y}{\ee} \right) [V(w_\ee (t, x)) - V(w_\ee (t, y) )] u_\ee (t, y)dy. 
\end{equation}
To avoid some technicalities, we first provide the proof of Theorem~\ref{t:tvb} under some further assumptions on the initial datum $u_0$ and the convolution kernel $\eta$. We then remove these assumptions by relying on a fairly standard approximation argument. \\
{\sc Step 1:} we impose the further assumptions $u_0 \in \mathrm{Lip}( \R)$, $\eta \in C^2 (]-\infty, 0[)$, $\eta'' \in L^1 (]-\infty, 0[)$. Owing to iii) in the statement of Corollary~\ref{c:w}, this implies that $w_\ee \in C^{1,1} (\R_+ \times \R)$. We set $z_\ee : = \partial_x w_\ee$ and point out that $z_\ee \in \mathrm{Lip} (\R_+ \times \R)$. By differentiating~\eqref{e:ewgen} we get 
\begin{equation}\label{e:z_ee}
\begin{split}
\partial_t z_\ee + \partial_x (V(w_\ee) z_\ee) = 
 \frac{1}{\ee^2}\int_x^{+\infty} \biggl[ & \frac{1}{\ee}\eta''\left( \frac{x-y}{\ee}\right)\big(V(w_\ee(t,x))- V(w_\ee(t,y))\big)  \\
&  + \eta'\left( \frac{x-y}{\ee}\right) V'(w_\ee(t,x)) z_\ee(t,x) \biggr] u_\ee(t,y) dy.
 \end{split}
\end{equation}
Note that the right-hand side of the above expression is finite since $w_\ee$ and $u_\ee$ are both bounded functions, and both $\eta''$ and $\eta'$ are summable. 
Assume for a moment we have shown that 
\be 
\label{e:finitetv}
      \mathrm{Tot Var} \, w_\ee (t, \cdot) =
       \int_\R |z_\ee (t, x)| < + \infty \quad \text{for every $t>0$ and $\ee>0$};
\eq
then by multiplying~\eqref{e:z_ee} by $s(t, x) := \mathrm{sign}(z_\ee(t, x))$ and $x$-integrating over $\R$ we arrive at 
\begin{equation*}
\begin{split}
\frac{d}{dt} \int_\R \! \! |z_\ee(t,x)| dx  \! + \!\int_\R \! \partial_x (V(w_\ee) |z_\ee|) dx& = 
 \frac{1}{\ee^2}\int_\R s(t,x) \int_x^{+\infty} \biggl[  \frac{1}{\ee}\eta''\left( \frac{x-y}{\ee}\right)\big(V(w_\ee(t,x))\!-\! V(w_\ee(t,y))\big)  \\
&  \qquad + \eta'\left( \frac{x-y}{\ee}\right) V'(w_\ee(t,x)) z_\ee(t,x) \biggr] u_\ee(t,y) dy  dx.
\end{split}
\end{equation*}
By relying on Fubini's theorem we rewrite the above equality as 
\begin{equation} \label{e:fubini}
\begin{split}
\frac{d}{dt} \mathrm{TotVar} \, w_\ee (t, \cdot) =
\frac{d}{dt}  \int_\R \! \! |z_\ee(t,x)| dx& = \frac{1}{\ee^2}\int_\R u_\ee(t,y) \sigma(t,y) dy,
\end{split}
\end{equation}
where
\begin{equation*}
%\begin{split}
\sigma(t,y) : =  \int_{-\infty}^y \! \! \frac{1}{\ee}\eta''\left( \frac{x-y}{\ee}\right)\big(V(w_\ee(t,x))- V(w_\ee(t,y))\big) s(t,x) dx 
%&
 +\int_{-\infty}^{y} \! \! \eta'\left( \frac{x-y}{\ee}\right) V'(w_\ee(t,x)) |z_\ee(t,x)| dx.
%\end{split}
\end{equation*}
By using the integration by parts formula we arrive at 
\be \label{e:sigma}
     \sigma(t,y) : =  \int_{-\infty}^y \! \! \frac{1}{\ee}\eta''\left( \frac{x-y}{\ee}\right)
      \Big[
     \big(V(w_\ee(t,x))- V(w_\ee(t,y))\big) s(t,x)  - \gamma(t, x, y)  \Big]dx, 
\eq
where
$$
     \gamma (t, x, y) : = - \int_x^y V'(w_\ee(t,\xi)) |z_\ee(t,\xi)| d \xi. 
$$
Note that $\gamma \ge 0$ since $x\leq y$ and $V' \leq 0$. Also, 
$$
    |V(w_\ee(t,x))- V(w_\ee(t,y))| \leq 
     \left| \int_y^x  V'(w_\ee(t,\xi)) z_\ee(t,\xi) d \xi   \right| \leq \gamma(t, x, y)
$$
and by recalling~\eqref{e:sigma} and the inequality $\eta''\ge0$ we conclude that $\sigma (t, y) \leq 0$. By plugging this inequality into~\eqref{e:fubini} and recalling that $u_\ee \ge 0$ owing to~\eqref{e:maxpu} we conclude that the right-hand side of~\eqref{e:fubini} is nonpositive, which in turn yields~\eqref{e:tv}. \\
{\sc Step 2:} we establish~\eqref{e:finitetv} under the assumptions that $u_0 \in \mathrm{Lip}(\R)$ and $\eta \in C^2 (]-\infty, 0[)$, $\eta'' \in L^1 (]-\infty, 0[)$. We recall that $z_\ee = \partial_x w_\ee$ and it is bounded by the first inequality in~\eqref{e:maxz}.
Next, we fix $R>0$, multiply~\eqref{e:z_ee} by $s(t, x) := \mathrm{sign}(z_\ee(t, x))$ and $x$-integrate over the interval $]-R, R[$. By performing a change of variables and applying Fubini's theorem we arrive at 
\begin{equation} \label{e:erre}
\begin{split}
     \frac{d}{dt} \int_{-R}^R |z_\ee (t, x)| dx & +
     \int_{-R}^R \partial_x (V(w_\ee)|z_\ee|) (t, x) dx 
     \leq \underbrace{
     \frac{1}{\ee^2}  \int_{\R_-}  \eta''(\xi) 
     \int_{-R}^{R}\big| V(w_\ee(t,x))\!-\! V(w_\ee(t,x - \ee \xi))\big| dx d \xi}_{:= T_1} \\
&   + \underbrace{\frac{1}{\ee}  \int_{\R_-} \eta' (\xi) \int_{-R}^{R} |V'(w_\ee(t,x))|| z_\ee(t,x) | u_\ee(t,x-\ee \xi) dx d\xi}_{:=T_2}.
\end{split}
\end{equation}
Note that 
\begin{equation} \label{e:t1}
\begin{split}
      T_1 & \leq  \underbrace{\frac{1}{\ee^2}  \int_{\R_-}  \eta''(\xi) 
     \int_{-R}^{R+ \ee \xi}\big| V(w_\ee(t,x))\!-\! V(w_\ee(t,x - \ee z))\big| dx d \xi}_{T_{11}} \\
     & + \underbrace{\frac{1}{\ee^2}  \int_{\R_-}  \eta''(\xi) 
     \int_{R+ \ee \xi }^{R}\big| V(w_\ee(t,x))\!-\! V(w_\ee(t,x - \ee z))\big| dx d \xi.}_{T_{12}}
\end{split}
\end{equation}
We have 
\be \label{e:t12}
    T_{12} \stackrel{\eqref{e:maxpw}}{\leq} -
     \frac{2}{\ee} \max_{w \in [0, 1]}|V(w)| \int_{\R_-}  \eta''(\xi) 
     \xi d\xi \stackrel{\text{integration by parts},~\eqref{e:etader}}{=} \frac{2}{\ee} \max_{w \in [0, 1]}|V(w)| \eta(0^-).
\eq
To control the term $T_{11}$, we point out that, if $\xi\leq 0$ and $x \in ]-R, R + \ee \xi[$, then both $x$ and $x-\ee  \xi$ belong to the interval $]-R, R[$. This implies that 
\be \label{e:t11}
   T_{11} \leq \frac{1}{\ee} \| V' \|_{L^\infty} \left( \int_{\R_-}  \eta''(\xi)  |\xi | d \xi \right) 
   \left( \int_{-R}^R |z_\ee (t, x)| dx \right)=
   \frac{1}{\ee} \| V' \|_{L^\infty} \eta (0^-)  
    \int_{-R}^R |z_\ee (t, x)| dx.
\eq
We also have 
\be \label{e:t2}
    T_2  \stackrel{\eqref{e:maxpu}}{\leq} \frac{1}{\ee} \eta(0^-) \essup_{w \in ]0, 1[}| V' (w)| \int_{-R}^R |z_\ee (t, x)| dx.
\eq
We plug~\eqref{e:t12} and \eqref{e:t11} into~\eqref{e:t1}, combine them with~\eqref{e:erre} and~\eqref{e:t2}, recall~\eqref{e:maxz} and apply Gronwall's Lemma. We obtain a bound on $\int_{-R}^R |z_\ee (t, x)| dx $ which does not depend on $R$ (albeit it depends on $\ee$). By sending $R\to + \infty$ we establish~\eqref{e:finitetv}. \\
{\sc Step 3:} we remove the assumptions $u_0 \in \mathrm{Lip}(\R)$ and $\eta \in C^2 (]-\infty, 0[)$, $\eta'' \in L^1 (]-\infty, 0[)$. We fix $u_0$ and $\eta$ as in the statement of Theorem~\ref{t:tvb} and term $u_\ee$ the solution of the Cauchy problem~\eqref{e:nlc} and $w_\ee$ the corresponding convolution terms. Next, we 
construct suitable sequences $\{ u_{0n} \}$ and $\{ \eta_n\}$ satisfying the assumptions of Theorem~\ref{t:tvb}, the further conditions $u_{0n} \in \mathrm{Lip} (\R)$, $\eta_n \in C^2 (]-\infty, 0[)$, $\eta''_n \in L^1 (]-\infty, 0[)$ and such that 
\be \label{e:conv}
       u_{0n} \weaks u_0 \; \text{weakly$^\ast$ in $L^\infty (\R)$}, \quad \mathrm{Tot Var} \, u_{0n} \leq \mathrm{Tot Var} \, u_{0}, \quad 
      \lim_{n \to + \infty}  \| u_{n0} - u_n \|_{L^1 (\R)} =0
\eq
and that 
\be \label{e:conv2}
        \eta_n \to \eta  \; \text{strongly in $L^1 (\R)$}, \quad \{ \eta_n(0) \} \; \text{is uniformly bounded}.
\eq
We can for instance define $u_{0n}$  by convolving $u_0$  with a suitable family of convolution kernels $\{ \rho_n \}$ and then use~\eqref{e:tvl1} to obtain the last condition in~\eqref{e:conv}. 

We term $u_{\ee n}$ the sequence of solutions of the Cauchy problem~\eqref{e:nlc} and $w_{\ee n}$ the corresponding sequence of convolution terms. By recalling~\eqref{e:maxpu} and by combining~\eqref{e:maxz} with~\eqref{e:conv2} and the Ascoli-Arzel\`a theorem we get that 
$$
    u_{\ee n} \weaks v_\ee \quad \text{weakly$^\ast$ in $L^\infty (\R_-\times \R)$}, 
     \qquad w_{\ee n} \to p_\ee \quad \text{in $C^0(K)$, for every $K \subseteq \R_-\times \R$ compact}
$$
for suitable functions $v_\ee \in L^\infty (\R_-\times \R)$ and $p_\ee \in \mathrm{Lip} (\R_-\times \R)$.  Owing to~\eqref{e:conv2} we can pass to the limit in the equality $w_{\ee n} (t, x)= \int_{\R_-}  \eta_n(\xi) u_{\ee n} (t, x-\ee \xi) d \xi$  and arrive at $p_{\ee } (t, x)= \int_{\R_-}  \eta(\xi) v_\ee (t, x-\ee \xi) d \xi$. Also, by passing to the limit in the distributional formulation we infer that $v_\ee$ is a bounded distributional solution of the Cauchy problem~\eqref{e:nlc} and by uniqueness this implies $v_\ee =u_\ee$, $p_\ee = w_\ee$. By applying {\sc Step 1} to the sequence $w_{\ee n}$, recalling the lower semicontinuity of the total variation and then passing to the limit we arrive at 
\be \label{e:quasi}
    \mathrm{Tot Var} \, w_\ee (t, \cdot) \leq \liminf_{n \to + \infty}   \mathrm{Tot Var} \, w_{\ee n} (0, \cdot).
\eq
By combining~\eqref{e:conv} with~\eqref{e:conv2} and~\eqref{e:wx} we get 
$$
    \lim_{n \to + \infty} \mathrm{Tot Var} \, w_{\ee n} (0, \cdot)=  \mathrm{Tot Var} \, w_{\ee } (0, \cdot)
$$
and by plugging the above equality into~\eqref{e:quasi} we get~\eqref{e:tv}.
\section{Proof of Theorem~\ref{t:entropy}} \label{s:en}
We need the following lemma. 
{\begin{lemma}\label{l:compactnessL1}
Let $\eta$ satisfy~\eqref{e:eta} and assume that $\{ v_k \}$ is a pre-compact sequence in $L^1_{\mathrm{loc}}(\R_+ \times \R)$ such that $\| v_k \|_{L^\infty} \leq \Lambda$ for some $\Lambda>0$ and for every $k$. Set 
$$
    F_{k \ee}(t,x) := \int_x^{+\infty} \frac1\ee \eta\left( \frac{x-y}\ee\right)|v_k(t,y) - v_k(t,x)| dy;
$$
then the family $\{ F_{k \ee} \}$ converges to $0$ as $\ee \to 0^+$ in $L^1_{\mathrm{loc}}(\R_+ \times \R)$, uniformly in $k$. In other words, for every  $r, T, M>0$, there is $\tilde \ee>0$, depending on $r, T$ and $M$ only, such that if $\ee \leq \tilde \ee$, then 
\[
\int_0^T \int_{-M}^M | F_{k \ee}(t,x)| dx dt \leq r \quad \text{for every $k$}. 
\]
\end{lemma}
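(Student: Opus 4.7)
The plan is to reduce the claim to the equicontinuity of translations, which follows automatically from the pre-compactness of $\{v_k\}$ in $L^1_{\loc}(\R_+\times\R)$. First, performing the change of variables $\xi=(x-y)/\ee$ in the defining integral and using $\mathrm{supp}\,\eta\subseteq\R_-$, one rewrites
\[
F_{k\ee}(t,x)=\int_{\R_-}\eta(\xi)\,|v_k(t,x-\ee\xi)-v_k(t,x)|\,d\xi.
\]
Fubini's theorem then yields
\[
\int_0^T\!\!\int_{-M}^M |F_{k\ee}(t,x)|\,dx\,dt \leq \int_{\R_-}\eta(\xi)\,I_{k,\ee}(\xi)\,d\xi,
\]
where $I_{k,\ee}(\xi):=\int_0^T\int_{-M}^M |v_k(t,x-\ee\xi)-v_k(t,x)|\,dx\,dt$.

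I would then split the outer integral at some threshold $L>0$, to be chosen. On the tail $\xi\in(-\infty,-L]$, the $L^\infty$ bound $\|v_k\|_{L^\infty}\leq\Lambda$ gives $I_{k,\ee}(\xi)\leq 4\Lambda T M$, and hence this contribution is bounded by
\[
4\Lambda T M \int_{-\infty}^{-L}\eta(\xi)\,d\xi,
\]
which can be made smaller than $r/2$ by choosing $L$ large, since $\eta\in L^1(\R_-)$ by~\eqref{e:eta}. The resulting threshold $L$ depends only on $r$, $T$, $M$ and $\Lambda$.

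For the near range $\xi\in[-L,0]$, I would invoke the Kolmogorov--Riesz--Fr\'echet characterization of pre-compact subsets of $L^1$ on bounded domains: since the restrictions of $\{v_k\}$ form a pre-compact subset of $L^1([0,T]\times[-M-1,M+1])$, translations are equicontinuous, i.e. there exists a modulus $\omega$, independent of $k$, with $\omega(h)\to 0$ as $h\to 0^+$ and
\[
\int_0^T\!\!\int_{-M}^M |v_k(t,x-h)-v_k(t,x)|\,dx\,dt\leq \omega(|h|) \quad \text{for all } k \text{ and all } |h|\leq 1.
\]
Taking $\ee$ small enough that $\ee L\leq 1$ yields $I_{k,\ee}(\xi)\leq \omega(\ee L)$ uniformly in $\xi\in[-L,0]$ and in $k$, so this contribution is at most $\omega(\ee L)\,\|\eta\|_{L^1}$, which is $\leq r/2$ for $\ee$ sufficiently small.

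The main, and essentially only, delicate point is the extraction of the uniform-in-$k$ modulus $\omega$; this is precisely where pre-compactness of $\{v_k\}$ (rather than mere $L^\infty$ boundedness) is used. The slight enlargement of the spatial window from $[-M,M]$ to $[-M-1,M+1]$ is needed so that, for $x\in[-M,M]$ and $|h|=|\ee\xi|\leq 1$, the translated argument $x-h$ remains in the fixed compact set on which the Kolmogorov--Riesz--Fr\'echet criterion is applied.
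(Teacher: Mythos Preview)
Your proposal is correct and follows essentially the same route as the paper: change variables $\xi=(x-y)/\ee$, apply Fubini, split the $\xi$-integral into a tail (controlled via the $L^\infty$ bound and integrability of $\eta$) and a near range (controlled via the uniform $L^1$-equicontinuity of translations coming from Kolmogorov--Riesz--Fr\'echet). The only cosmetic differences are notation (your threshold $L$ is the paper's $R$; your modulus $\omega$ is the paper's $\tilde\tau$) and the way the spatial window is enlarged, but the argument is the same.
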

\begin{proof}
We proceed according to the following steps. \\
{\sc Step 1:} we rely on the Fr\'echet-Kolmogorov theorem and infer that the sequence $\{ v_k \}$ is equicontinuous in $L^1_{\mathrm{loc}}$. In particular, for every $T>0$, $L>0$ and $\nu>0$ there is $\tilde \tau (T, L, \nu)>0$ such that, if $|\tau|< \tilde \tau (T, L, \nu)$, then 
\be \label{e:equic2}
     \int_0^T \int_{-L +  \tau}^{L-  \tau} |v_k (t, x+ \tau) - v_k (t, x) | dx dt \leq \nu
     \quad \text{for every $k$.}
\eq
{\sc Step 2:} we now fix $\delta>0$, choose $R>0$ in such a way that $\int_{-\infty}^{-R} \eta(z) dz < \delta$ and point out that 
\be \label{e:equic}
\begin{split}
\int_0^T \int_{-M}^M & | F_{k\ee}(t,x)| dx dt 
\stackrel{\xi = \frac{x-y}{\ee}}{=}  \int_0^T \int_{-M}^M 
    \int_{\R_-} \eta(\xi)|v_k(t,x- \ee \xi) - v_k(t,x)| d\xi dx dt \\
&\stackrel{\text{Fubini}}{=}\int_{-R}^0 \! \!   \eta(\xi) \int_0^T \! \! \int_{-M}^M \! \!
    |v_k(t,x- \ee \xi) - v_k(t,x)|  dx dt d\xi \\ & 
    \qquad + \int_{-\infty}^{-R}\! \!  \eta(\xi) \int_0^T\! \! \int_{-M}^M \! \!
   |v_k(t,x- \ee \xi) - v_k(t,x)| dx dt d\xi \stackrel{\eqref{e:equic2}}{\leq} \nu + 4 \delta TM \underbrace{\| v_k \|_{L^\infty}}_{\leq \Lambda}  
\end{split}
\eq
provided $L=M+1$ and $\ee R < \min\{\tilde \tau (T, L, \nu),  1 \}$. By the arbitrariness of $\nu$ and $\delta$ this concludes the proof of Lemma~\ref{l:compactnessL1}. 
\end{proof}}
We are now ready to provide the actual proof of Theorem~\ref{t:entropy}.
\begin{proof}[Proof of Theorem~\ref{t:entropy}]
We fix an entropy-entropy flux pair $(\alpha, \beta)$ for~\eqref{e:cl}, namely $\alpha, \beta: \R \to \R$ are two $C^2$ functions satisfying $\alpha '' \ge 0$, $\beta' (u)=\alpha'(u) [ V(u) + u V'(u)]$. Also, we fix $\phi \in C^\infty_c (\R_+ \times \R)$ satisfying $\phi\ge 0$. By the arbitrariness of $(\alpha, \beta)$ and $\phi$, the proof of Theorem~\ref{t:entropy} boils down to the proof of the inequality 
\be \label{e:ineq}
D_\alpha(\phi) : =\iint_{\R_+ \times \R}  \big[ \alpha(u) \partial_t \phi+ \beta(u) \partial_x \phi \big] dx dt + \int_\R \alpha(u_0(x))\phi(0,x) dx \ge 0. 
\eq
Note that in particular~\eqref{e:ineq} implies that $u$ is a distributional solution of~\eqref{e:cl}. Indeed,  by choosing $(\alpha(u), \beta(u))= (u, uV(u))$ and $(\alpha(u), \beta(u))= (-u, -uV(u))$ we get the inequalities $\partial_t u + \partial_x [uV] \leq 0$ and 
$\partial_t u + \partial_x [uV] \ge 0$, respectively, which are satisfied in the sense of distributions, and from this we infer~\eqref{e:cl}. 

To establish~\eqref{e:ineq} we set $\eta_\ee (\xi) : = \frac{1}{\ee} \eta\left( \frac{\xi}{\ee}\right)$, choose a sequence $\{\ee_k \}$ and set 
\be \label{e:dalpha}
      D^{\ee_k}_\alpha(\phi) : =\iint_{\R_+ \times \R}   \big[ \alpha(w_{\ee_k}) \partial_t \phi+ \beta(w_{\ee_k}) \partial_x \phi \big] dx dt +
       \int_\R \alpha(w_{\ee_k}(0, x)) \phi(0, x)dx . 
\eq
We recall that by assumption $w_{\ee_k}\to w$ in $L^1(]0,T[\times \R)$ and that $w_{\ee_k}(0,x)= u_0\ast \eta_{\ee_k}\to u_0$ in $L^1(\R)$. This implies that $\lim_{k \to + \infty}  D^{\ee_k}_\alpha(\phi) = D_\alpha(\phi)$ and hence that establishing~\eqref{e:ineq} amounts to show that 
\be \label{e:ineq2}
     \lim_{k \to + \infty}  D^{\ee_k}_\alpha(\phi) \ge 0. 
\eq 
To establish~\eqref{e:ineq2} we proceed according to the following steps. \\
{\sc Step 1:} we write $D^{\ee_k}_\alpha(\phi)$ in a more convenient form. To this end we 
consider the equation at the first line of~\eqref{e:nlc}, convolve it with $\eta_{\ee_k}$ and then multiply the result times $\alpha' (w_{\ee_k})$. We arrive at 
\[
\partial_t \alpha(w_{\ee_k}) + \alpha'(w_{\ee_k}) \partial_x \big[[ V(w_{\ee_k}) u_{\ee_k} ]\ast \eta_{\ee_k}\big] = 0. 
\]
Next, we multiply the above equation times $\phi$ and integrate over $\R_+ \times \R$. Owing to the integration by parts formula this yields
\begin{equation}\label{e:alphaweek}
 \iint _{\R_+ \times \R} \alpha(w_{\ee_k})  \partial_t \phi + [(V(w_{\ee_k})u_{\ee_k})\ast\eta_{\ee_k}] \partial_x [ \alpha'(w_{\ee_k})\phi ]dx dt + \int_\R \alpha (w_{\ee_k}(0,x)) \phi(0,x) dx = 0.
\end{equation}
We now point that, owing to the equality $\beta' (u)=\alpha'(u) [ V(u) + u V'(u)]$, we have 
\be \label{e:pino}
\begin{split}
         \iint _{\R_+ \times \R}  \beta(w_{\ee_k}) \partial_x \phi \ dx dt & = -   \iint _{\R_+ \times \R} \beta' (w_{\ee_k}) \partial_x w_{\ee_k} \phi \ dx dt
        \\ & = -  \iint _{\R_+ \times \R} \alpha'(w_{\ee_k}) [ V(w_{\ee_k}) + w_{\ee_k} V'(w_{\ee_k})] \partial_x w_{\ee_k} \phi \ dx dt \\ & = 
        -   \iint _{\R_+ \times \R} \alpha'(w_{\ee_k}) \partial_x [ V(w_{\ee_k})  w_{\ee_k}]  \phi \ dx dt =
       \iint _{\R_+ \times \R}   V(w_{\ee_k})  w_{\ee_k} \partial_x [  \alpha'(w_{\ee_k}) \phi]  dx dt .
\end{split}
\eq
By comparing~\eqref{e:dalpha} with~\eqref{e:alphaweek} and using~\eqref{e:pino} we then get 
\begin{equation}\label{e:mualpha}
\begin{split}
D^{\ee_k}_\alpha(\phi) &
\stackrel{\eqref{e:alphaweek},\eqref{e:pino}}{=}
   \iint _{\R_+ \times \R} \underbrace{ \big[  V(w_{\ee_k})w_{\ee_k} - [ V(w_{\ee_k})u_{\ee_k}] \ast \eta_{\ee_k}  \big] }_{S_\ee (t, x)}
  \partial_x[ \alpha'(w_{\ee_k})\phi] dx dt \\
  &=   \iint _{\R_+ \times \R}  S_\ee (t, x)  \alpha'(w_{\ee_k}) \partial_x \phi dx dt + 
  \underbrace{  \iint _{\R_+ \times \R}  S_\ee (t, x)  \partial_x [\alpha'(w_{\ee_k}) ] \phi dx dt}_{T_1^{\ee_k}} . 
\end{split}
\end{equation}
Note that \begin{equation}\label{e:mualpha2}
\begin{split}
 T_1^{\ee_k} & =
  \iint _{\R_+ \times \R} \int_x^{+ \infty} \partial_x[ \alpha'(w_{\ee_k})](t,x) \phi (t,x)\frac1{\ee_k}\eta\left(\frac{x-y}{{\ee_k}}\right) u_{\ee_k}(t,y) \Big[ V(w_{\ee_k}(t,x)) -  V(w_{\ee_k}(t,y)) \Big] dy dx dt \\
=&  \iint _{\R_+ \times \R} u_{\ee_k}(t,y) \omega_{\ee_k} (t,y) dy dt,
\end{split}
\end{equation}
provided 
\[
\begin{split}
\omega_{\ee_k}(t,y) := &~  \int_{-\infty}^y \partial_x [\alpha'(w_{\ee_k})](t,x) \phi (t,x)\frac1{\ee_k}\eta\left(\frac{x-y}{{\ee_k}}\right) 
   \Big[ V(w_{\ee_k}(t,x)) -  V(w_{\ee_k}(t,y))  \Big] dx  \\
= &~ \int_{-\infty}^y \underbrace{\partial_x [ \alpha'(w_{\ee_k})](t,x)V(w_{\ee_k}(t,x))}_{\partial_x I(w_{\ee_k})}  \phi (t,x)\frac1{\ee_k}\eta\left(\frac{x-y}{{\ee_k}}\right) dx \\
&~ - V(w_{\ee_k}(t,y)) \int_{-\infty}^y \partial_x[ \alpha'(w_{\ee_k})](t,x) \phi (t,x)\frac1{\ee_k}\eta\left(\frac{x-y}{{\ee_k}}\right) dx.
\end{split}
\]
In the previous expression, we have chosen the $C^1$ function $I$ in such a way that $I'(u) = \alpha''(u) V(u)$. By applying the integration by parts formula we then get 
\begin{equation} \label{e:omega}
\begin{split}
        \omega_{\ee_k}(t,y) = &I (w_{\ee_k}(t,y))\phi(t,y) \frac{\eta(0)}{\ee_k} - \int_{-\infty}^y  I(w_{\ee_k}(t,x)) \partial_x \left[  \phi (t,x)\frac1{\ee_k}\eta\left(\frac{x-y}{{\ee_k}}\right)\right] dx \\
& -  V(w_{\ee_k}(t,y)) \left[ \alpha'(w_{\ee_k}(t,y))\phi(t,y) \frac{\eta(0)}{\ee_k} - 
     \int_{-\infty}^y \alpha'(w_{\ee_k}(t,x))  \partial_x \left[  \phi (t,x)\frac1{\ee_k}\eta\left(\frac{x-y}{{\ee_k}}\right)\right] dx \right] \\
= & ~\int_{-\infty}^y  \big[ I(w_{\ee_k}(t,y)) - I (w_{\ee_k}(t,x)) \big]  \partial_x \left[  \phi (t,x)\frac1{\ee_k}\eta\left(\frac{x-y}{{\ee_k}}\right)\right] dx\\
&~  -  V(w_{\ee_k}(t,y)) \int_{-\infty}^y   \big[ \alpha'(w_{\ee_k}(t,y)) -  \alpha'(w_{\ee_k}(t,x))\big]\partial_x \left[  \phi (t,x)\frac1{\ee_k}\eta\left(\frac{x-y}{{\ee_k}}\right)\right] dx \\& 
= G^1_{\ee_k} (t, y) +G^2_{\ee_k} (t, y) + P_{\ee_k} (t, y), \phantom{\int}
\end{split}
\end{equation}
provided 
\begin{eqnarray} \label{e:G}
   G^1_{\ee_k} (t, y) : =
     \int_{-\infty}^y  \big[ I(w_{\ee_k}(t,y)) - I (w_{\ee_k}(t,x)) \big]  \frac1{\ee_k}\eta\left(\frac{x-y}{{\ee_k}}\right) \partial_x  \phi (t,x) dx \\ \nonumber
   G^2_{\ee_k} (t, y) =  - V(w_{\ee_k}(t,y)) \int_{-\infty}^y   \big[ \alpha'(w_{\ee_k}(t,y)) -  \alpha'(w_{\ee_k}(t,x))\big]\frac1{\ee_k}\eta\left(\frac{x-y}{{\ee_k}}\right)
   \partial_x \phi (t,x)dx
\end{eqnarray}
and 
\begin{equation} \label{e:pi}\begin{split}
 P_{\ee_k} (t, y) : & = \int_{-\infty}^y H(w_\ee(t,x),w_\ee(t,y)) \phi(t,x) \partial_x \left[ \frac1\ee\eta\left(\frac{x-y}{\ee}\right)\right]  dx\\
 & = \frac{1}{\ee^2}
\int_{-\infty}^y H(w_\ee(t,x),w_\ee(t,y)) \phi(t,x) \eta'\left(\frac{x-y}{\ee}\right)  dx ,
\end{split}
\end{equation}
where
\be \label{e:acca}
H(a,b) : = I(b) - I(a) - V(b)(\alpha'(b) - \alpha'(a)).
\eq
By plugging~\eqref{e:G} and~\eqref{e:pi} into~\eqref{e:omega} and then recalling~\eqref{e:mualpha} and~\eqref{e:mualpha2} we then arrive at 
\be
\label{e:perdopo}
    D^{\ee_k}_\alpha(\phi) =
      \iint _{\R_+ \times \R}  S_\ee (t, x)  \alpha'(w_{\ee_k}) \partial_x \phi dx dt + 
       \iint _{\R_+ \times \R} u_{\ee_k}(t,y) [ G^1_{\ee_k} (t, y) +G^2_{\ee_k} (t, y) +
      P_{\ee_k} (t, y) ] dy dt
\eq
{\sc Step 2:} we establish~\eqref{e:ineq2}. \\
{\sc Step 2A:} we show that 
\be \label{e:perdopo2}
      D^{\ee_k}_\alpha(\phi) \ge 
     \underbrace{\iint_{\R_+ \times \R}  S_\ee (t, x)  \alpha'(w_{\ee_k}) \partial_x \phi dx dt}_{T^{\ee_k}_2} + 
     \underbrace{ \iint_{\R_+ \times \R}  u_{\ee_k}(t,y) G^1_{\ee_k} (t, y)dy dt}_{T^{\ee_k}_3} +
      \underbrace{ \iint_{\R_+ \times \R}  u_{\ee_k}(t,y) G^2_{\ee_k} (t, y)] dy dt}_{T^{\ee_k}_4} .
\eq
Owing to~\eqref{e:perdopo} and since $u_{\ee_k} \ge 0$ it suffices to show that $P_{\ee_k} \ge 0$. To this end we recall~\eqref{e:pi} and~\eqref{e:acca} 
and that $I'(u) = \alpha''(u) V(u)$. We point out that 
$$
     \frac{\partial H}{\partial a} (u, b) = - I'(u) + V(b) \alpha''(u) = \alpha''(u) [V(b) - V(u)] 
$$
and by recalling that $\alpha''\ge 0$ and that $V' \leq 0$ we conclude that the last expression is non-positive if $u\leq b$ and non-negative if $u \ge b$. This implies that $u=b$ is a minimum for $H(u, b)$ and since $H(b, b)=0$ we obtain the inequality $H(a, b) \ge0$ for every $(a, b) \in \R^2.$ By plugging this information into~\eqref{e:pi} and recalling that $\phi \ge 0$ and that $\eta'\ge 0$ by~\eqref{e:eta}, we conclude that $P_{\ee_k} \ge 0$.   \\
{\sc Step 2B:} to establish~\eqref{e:ineq2} it suffices to show that the right-hand side of~\eqref{e:perdopo2} vanishes in the $k \to + \infty$ limit. To this end we recall the explicit expression~\eqref{e:mualpha} of $S_{\ee_k}$ and point out that 
\be \label{e:esse}
\begin{split}
S_{\ee_k} (t,x) =  &~ \int_x^{+\infty}\frac1{\ee_k}\eta\left(\frac{x-y}{\ee_k}\right) \Big[
    w_{\ee_k} (t,x)V(w_{\ee_k}(t,x)) - u_{\ee_k}(t,y) V(w_{\ee_k}(t,y)) \Big] dy \\
= &~ \int_x^{+\infty}\frac1{\ee_k}\eta\left(\frac{x-y}{\ee_k}\right) u_{\ee_k}(t,y) \Big[  V(w_{\ee_k}(t,x)) -  V(w_{\ee_k}(t,y)) \Big] dy. 
\end{split}
\eq
Owing to~\eqref{e:maxpu}, $\|u_{\ee_k}(t,y) \|_{L^\infty} \leq 1$; by applying Lemma~\ref{l:compactnessL1} with $v_k: = V(w_{\ee_k})$ we get that $S_{\ee_k}$ converges to $0$ in {$L^1_{\mathrm{loc}} (\R_+ \times \R)$. Since $\phi$ is compactly supported, this implies that $\lim_{k \to + \infty} T^{\ee_k}_2=0$.
We now show that $\lim_{k \to + \infty} T^{\ee_k}_3=0$. To this end, we point out that 
\[
\begin{split}
|T^{\ee_k}_3 |& \stackrel{\eqref{e:G},\eqref{e:perdopo2}}{\leq}
\iint_{\R_+ \times \R}  u_{\ee_k}(t,y) \int_{-\infty}^y  \big| I(w_{\ee_k}(t,y)) - I (w_{\ee_k}(t,x)) \big|  \frac1{\ee_k}\eta\left(\frac{x-y}{{\ee_k}}\right) | \partial_x  \phi (t,x)| dx dy  dt \\
& \stackrel{\text{Fubini}}{=}   \iint_{\R_+ \times \R}  | \partial_x  \phi (t,x)| \int_x^{+\infty}  \underbrace{u_{\ee_k}(t,y)}_{\leq 1 \; \text{by~\eqref{e:maxpu}}}  \big| I(w_{\ee_k}(t,y)) - I (w_{\ee_k}(t,x)) \big|  \frac1{\ee_k}\eta\left(\frac{x-y}{{\ee_k}}\right) dy dx dt.
\end{split}
\]
Since $\phi$ is compactly supported, by applying Lemma~\ref{l:compactnessL1} with $v_k: = I(w_{\ee_k})$ we conclude that $T^{\ee_k}_3 $ vanishes in the $\ee \to 0^+$ limit.  By relying on an analogous argument we show that $\lim_{k \to + \infty} T^{\ee_k}_4=0$ and owing to~\eqref{e:perdopo2} this concludes the proof of~\eqref{e:ineq2}. }
\end{proof}
\section{Proof of Theorem~\ref{c:conv}} \label{s:pc}
\subsection{Convergence proof} \label{ss:cp}
Let $u$ be the entropy admissible solution of~\eqref{e:cl}, we now establish~\eqref{e:vali}. \\
{{\sc Step 1:} we show that the family $ \{ w_\ee \}$ is pre-compact in the strong topology of $L^1_{\mathrm{loc}} (\R_+ \times \R)$.   To this end, we point out that, for every $T>0$, we have 
\be \label{e:wxtx}
    \int_0^T \int_\R |\partial_x w_\ee| (t, x) dxdt \stackrel{\eqref{e:tv}}{\leq} 
      T \mathrm{TotVar} w_\ee (0, \cdot) \stackrel{\eqref{e:tv0}}{\leq} T \, \mathrm{TotVar} \ u_0 
\eq
and 
\begin{equation*}
\begin{split}
         \int_0^T \int_\R& \left| \frac{1}{\ee^2} \int_x^{+\infty} 
          \eta' \left( \frac{x-y}{\ee} \right) [V(w_\ee (t, x)) - V(w_\ee (t, y) )] u_\ee (t, y)dy
          \right| dx dt \\ &
             \stackrel{\xi = \frac{x-y}{\ee}}{=}
         \frac{1}{\ee} \int_0^T \int_\R \left|  \int_{\R_-} \eta'(\xi)  
         [V(w_\ee (t, x -\ee \xi)) - V(w_\ee (t, y) )] u_\ee (t, x -\ee \xi)d \xi 
          \right| dx dt \\ &
\leq  \frac{1}{\ee}\int_0^T \int_{\R^-} | \eta'(\xi) | \int_{\R}  
         |V(w_\ee (t, x -\ee \xi)) - V(w_\ee (t, y) )| | u_\ee (t, x -\ee \xi)|d x  d\xi dt \\ &
        \stackrel{\eqref{e:maxpu},\eqref{e:tvchar}}{\leq} 
         T  \sup_{t \in ]0, T[} \mathrm{TotVar} [ V\circ w_\ee (t, \cdot) ] \int_{\R^-} |\eta'(\xi) \xi| d \xi \\& 
        \stackrel{\eqref{e:maxpw}}{\leq}
           T \, \essup_{w \in ]0, 1[} |V'(w)| \sup_{t \in ]0, T[}\mathrm{TotVar} \,  w_\ee (t, \cdot)  \int_{\R^-} |\eta'(\xi) \xi| d \xi \\
         &
         \stackrel{\eqref{e:tv},\eqref{e:tv0}}{\leq}
          T \, \essup_{w \in ]0, 1[} |V'(w)| \mathrm{TotVar} \,  u_0   \int_{\R^-} |\eta'(\xi) \xi| d \xi
          \stackrel{\eqref{e:etader}}{\leq}    T \, \sup_{w \in ]0, 1[} |V'(w)| \mathrm{TotVar} \,  u_0. 
\end{split}
\end{equation*}
Owing to~\eqref{e:ewgen} and recalling~\eqref{e:wxtx}, this implies that $\int_0^T \int_\R |\partial_t w_\ee| dx dt$ is also bounded uniformly in $\ee$. We recall~\eqref{e:maxpw} and by applying the Helly-Kolmogorov-Fr\'echet compactness theorem we eventually get the desired pre-compactness result. \\
{\sc Step 2:} fix a sequence $\ee_k \to 0^+$, then owing to {\sc Step 1} and up to subsequences $w_{\ee_k} \to u$ in $L^1_{\mathrm{loc}}(\R_+ \times \R)$ 
for some function $u \in L^\infty (\R_+ \times \R)$. Owing to Theorem~\ref{t:entropy}, $u$ is the entropy admissible solution of~\eqref{e:cl} and by the uniqueness of such solution this yields the first convergence result in~\eqref{e:vali}. \\
{\sc Step 3:}  let $u$ be as in {\sc Step 2}, we now show that $u_\ee \weaks u$ weakly$^\ast$ in $L^\infty (\R^+ \times \R)$.
Owing to~\eqref{e:maxpu}, the family $\{ u_\ee \}$ is pre-compact in $L^\infty (\R^+ \times \R)$ endowed with the weak$^\ast$ topology. To conclude, it suffices to show that any accumulation point $v$ satisfies $v=u$. To this end, we recall that, for any $\varphi \in C^\infty_c (\R_+ \times \R)$ we have the identity 
$$
   \iint_{\R_+ \times \R} \varphi w_\ee  dx dt = 
   \iint_{\R_+ \times \R} \varphi [u_\ee \ast \eta_\ee] dx dt =  \iint_{\R_+ \times \R}[ \varphi \ast \check \eta_\ee ] u_\ee dx dt  \quad \text{provided $\check \eta_\ee (x) : = \eta_\ee (-x)$}.
$$
By passing to the limit in the above inequality and using {\sc Step 2} and the arbitrariness of $\varphi$ we then arrive at $v =u$. }
\subsection{Proof of the convergence rate~\eqref{e:rate}}
We first introduce the so-called Kru{\v{z}}kov's entropy-entropy flux pairs
\be \label{e:ken}
     \alpha_c (u) : = |u-c|, \qquad \beta_c (u) = \mathrm{sign} (u-c) [V(u)u - V(c) c], \qquad c \in \R
\eq
and establish a preliminary result. 
\begin{proposition}\label{p:quasisolutions} {Assume that $u_0$, $V$ and $\eta$ satisfy~\eqref{e:u0}~\eqref{e:V} and~\eqref{e:eta}, respectively, and let  $w_\ee$  be as in~\eqref{e:w}, where $u_\ee$ is the solution of the Cauchy problem~\eqref{e:nlc}. If $\eta(\xi) \xi \in L^1(\R)$} there is a constant $K>0$ which only depends on $V$ and $\eta$ and satisfies the following properties. Fix $\ee>0$, then there is a function 
$E_\ee \in L^1_{\mathrm{loc}}(\R_+ \times \R)$ such that 
\be \label{e:dc}
   D^\ee_c (\phi) := \iint_{\R_+ \times \R}  \big[ \alpha_c(w_{\ee}) \partial_t \phi + \beta_c(w_{\ee}) \partial_x \phi \big] dx dt \ge -    \iint_{\R_+ \times \R}   E_\ee | \partial_ x\phi|  dx dt \text{ }
\eq
for every $c\in \R$ and every test function $\phi \in C^\infty_c(]0, + \infty[ \times \R), \; \phi \ge0$; also, 
\be \label{e:inte}
       \int_\R |E_\ee(t,x)| dx \le K \ee \mathrm{TotVar}\, w_\ee(t, \cdot) \quad
       \text{for a.e. $t \in \R_+$}.
\eq
\end{proposition}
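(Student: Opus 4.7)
The plan is to revisit identity \eqref{e:perdopo} from the proof of Theorem~\ref{t:entropy}, applied with a smooth convex approximation $\alpha_c^\delta \in C^2(\R)$ of the Kru\v{z}kov entropy $\alpha_c(u)=|u-c|$ (say, with $|\alpha_c^{\delta'}|\le 1$ and $\alpha_c^{\delta''}\ge 0$), and to track the error terms up to the limit $\delta\to 0^+$. Since $\phi\in C^\infty_c(]0,+\infty[\times \R)$ is nonnegative, the initial-boundary contribution in \eqref{e:dalpha} vanishes; since $P_\ee^\delta\ge 0$ by Step~2A of the proof of Theorem~\ref{t:entropy}, identity \eqref{e:perdopo} reduces to
\[
D^\ee_{\alpha_c^\delta}(\phi) \;\ge\; \iint_{\R_+\times \R} \partial_x\phi(t,x)\, \mathcal R_\ee^{c,\delta}(t,x)\, dx\, dt,
\]
where, after applying Fubini to the $G^{1,\delta}_\ee$ and $G^{2,\delta}_\ee$ contributions,
\[
\mathcal R_\ee^{c,\delta}(t,x) \;=\; S_\ee(t,x)\alpha_c^{\delta'}(w_\ee(t,x)) \;+\; \int_x^{+\infty} \frac{1}{\ee}\eta\!\left(\frac{x-y}{\ee}\right) u_\ee(t,y)\, \mathcal A^{c,\delta}\bigl(w_\ee(t,x),w_\ee(t,y)\bigr)\, dy,
\]
with $\mathcal A^{c,\delta}(a,b):=[I_c^\delta(b)-I_c^\delta(a)]-V(b)[\alpha_c^{\delta'}(b)-\alpha_c^{\delta'}(a)]$ and $I_c^{\delta'}=\alpha_c^{\delta''}V$.

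The crux of the argument is the algebraic cancellation
\[
\mathcal A^{c,\delta}(a,b) \;=\; \int_a^b \alpha_c^{\delta''}(s)[V(s)-V(b)]\, ds,
\]
which, combined with $\alpha_c^{\delta''}\ge 0$, the Lipschitz bound on $V$, and $|\alpha_c^{\delta'}|\le 1$, yields
\[
|\mathcal A^{c,\delta}(a,b)| \;\le\; \|V'\|_{L^\infty}\,|b-a|\,[\alpha_c^{\delta'}(b)-\alpha_c^{\delta'}(a)] \;\le\; 2\|V'\|_{L^\infty}|b-a|,
\]
uniformly in $c$ and $\delta$. This allows me to introduce the $c$- and $\delta$-independent function
\[
E_\ee(t,x) \;:=\; |S_\ee(t,x)| \;+\; 2\|V'\|_{L^\infty}\int_x^{+\infty} u_\ee(t,y)\,|w_\ee(t,y)-w_\ee(t,x)|\,\frac{1}{\ee}\eta\!\left(\frac{x-y}{\ee}\right)\, dy,
\]
which pointwise dominates $|\mathcal R_\ee^{c,\delta}|$.

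To bound $E_\ee$ in $L^1$, I change variables $\xi=(x-y)/\ee$ and apply Fubini in both summands. Using $u_\ee\le 1$, the Lipschitz property of $V$, the estimate \eqref{e:tvchar} giving $\|w_\ee(t,\cdot -\tau)-w_\ee(t,\cdot)\|_{L^1}\le |\tau|\,\mathrm{TotVar}\,w_\ee(t,\cdot)$, and the hypothesis $\eta(\xi)\xi\in L^1(\R)$, I obtain
\[
\int_\R E_\ee(t,x)\, dx \;\le\; 3\|V'\|_{L^\infty}\,\mathrm{TotVar}\,w_\ee(t,\cdot)\; \ee\!\int_{\R_-}\eta(\xi)|\xi|\,d\xi \;=:\; K\,\ee\,\mathrm{TotVar}\,w_\ee(t,\cdot),
\]
which is \eqref{e:inte}. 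Finally, $D^\ee_{\alpha_c^\delta}(\phi)\to D^\ee_c(\phi)$ as $\delta\to 0^+$ by dominated convergence (since $\alpha_c^\delta\to\alpha_c$ and $\beta_c^\delta\to\beta_c$ pointwise with uniform $L^\infty$ bound on $[0,1]$), while the lower bound $-\iint E_\ee|\partial_x\phi|\,dx\,dt$ is independent of $\delta$, proving \eqref{e:dc}.

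The main obstacle is obtaining estimates that are uniform in both the entropy parameter $c$ and the regularization parameter $\delta$. Estimating the $I_c^\delta$-difference and the $\alpha_c^{\delta'}$-difference \emph{separately} produces $\alpha_c^{\delta''}$-concentrated contributions that blow up as $\delta\to 0^+$ and depend singularly on $c$. The algebraic identity for $\mathcal A^{c,\delta}$ is precisely what converts this singular behavior into a Lipschitz bound in $|b-a|$ with constant $2\|V'\|_{L^\infty}$; equally important, this bound is \emph{independent of }$c$, which is what enables the construction of a single $E_\ee$ that simultaneously controls the entropy dissipation of all Kru\v{z}kov entropies, as required by the statement.
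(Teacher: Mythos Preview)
Your proof is correct and follows essentially the same approach as the paper: smooth approximation of the Kru\v{z}kov entropies, use of the decomposition~\eqref{e:perdopo} together with $P_\ee\ge 0$, a pointwise bound on the combined $G^1_\ee+G^2_\ee$ contribution of the form $2\,\essup|V'|\,|w_\ee(t,y)-w_\ee(t,x)|$, and the same $L^1$ estimate via the change of variables $\xi=(x-y)/\ee$ and~\eqref{e:tvchar}. The only cosmetic difference is that you bound $\mathcal A^{c,\delta}(a,b)=H(a,b)$ via the integral representation $\int_a^b \alpha''(s)[V(s)-V(b)]\,ds$, whereas the paper splits $H(a,b)=[L(b)-L(a)]+\alpha'(a)[V(b)-V(a)]$ with $L:=I-\alpha'V$ and bounds the two pieces separately; both routes give the identical estimate $|H(a,b)|\le 2\max|\alpha'|\,\essup|V'|\,|b-a|$ and hence the same $E_\ee$.
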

\begin{proof}
We fix a test function $\phi \in C^\infty_c(]0, + \infty[ \times \R), \; \phi \ge0$ and proceed according to the following steps.\\
{\sc Step 1:}  we fix a (classical) entropy-entropy flux pair, i.e. $\alpha, \beta \in C^2 (\R)$ satisfy $\alpha''\ge 0$ and $\beta'(u)=\alpha'(u) [uV'(u) + V(u)]$. We recall~\eqref{e:dalpha} and~\eqref{e:perdopo2} and conclude that 
\be \label{e:altalena}
    D^\ee_\alpha (\phi) \ge - \max_{w \in [0, 1]} |\alpha'(w)|
    \iint_{\R_+ \times \R} | S_\ee (t, x) |  |\partial_x \phi| dx dt
    +\iint_{\R_+ \times \R} u_{\ee_k}(t,y) [ G^1_{\ee} (t, y) +G^2_{\ee} (t, y) ] dy dt,
\eq
where $S_\ee$, $G^1_{\ee} $, $G^2_{\ee}$ are as in~\eqref{e:esse} and \eqref{e:G}, respectively.
We now set $L(u) : = I(u) - \alpha'(u) V(u)$ and, recalling the equality $I'(u) = \alpha''(u)V(u)$, conclude that $L'(u) = - \alpha'(u) V'(u)$. We then point out that 
\[
\begin{split}
(G^1_\ee + G^2_\ee)(t,y) = &~ \int_{-\infty}^y  \big[ L(w_{\ee}(t,y)) -L (w_{\ee}(t,x)) \big]  \frac1{\ee}\eta\left(\frac{x-y}{{\ee}}\right) \partial_x \phi (t,x) dx\\
& + \int_{-\infty}^y    \alpha'(w_{\ee}(t,x))\big[ V(w_\ee(t,y)) - V(w_\ee(t,x)) \big]\frac1{\ee}\eta\left(\frac{x-y}{{\ee}}\right) \partial_x  \phi (t,x)dx,
\end{split}
\]
which owing to Fubini's theorem and by recalling~\eqref{e:maxpu} yields 
\be \label{e:scivolo}
\begin{split}
 & \iint_{\R_+ \times \R}  (G^1_\ee + G^2_\ee)(t,y) u_\ee(t,y) dy dt  \\
\ge & -   \iint_{\R_+ \times \R}  |\partial_x  \phi | (t,x) \int_x^{+\infty} \big|L(w_{\ee}(t,y)) - L (w_{\ee}(t,x)) \big|  \frac1{\ee}\eta\left(\frac{x-y}{{\ee}}\right) dy dx dt \\
&  - \max_{w \in [0, 1]} |\alpha'(w)|  \iint_{\R_+ \times \R}   |\partial_x  \phi| (t,x) \int_x^{+\infty}   \big| V(w_\ee(t,y)) - V(w_\ee(t,x)) \big|\frac1{\ee}\eta\left(\frac{x-y}{{\ee}}\right) dy dx dt \\
& \stackrel{L'= - \alpha'V'}{\ge}\! \!
- 2 \max_{w \in [0, 1]} |\alpha'(w)|  \essup_{w \in ]0, 1[} | V'(w)| \iint_{\R_+ \times \R}   |\partial_x  \phi| (t,x) \int_x^{+\infty}  \! \! \big| w_\ee(t,y) - w_\ee(t,x) \big|\frac1{\ee}\eta\left(\frac{x-y}{{\ee}}\right) dy dx dt.
\end{split}
\eq
{\sc Step 2:} we fix $c \in \R$ and consider the Kru{\v{z}}kov's entropy-entropy flux pairs defined in~\eqref{e:ken}. We construct a sequence $(\alpha_n, \beta_n)$ of $C^2$ 
entropy-entropy flux pairs such that $\alpha_n \to \alpha_c$ and $\beta_n \to \beta_c $ as $n \to + \infty$ in $C^0 (\R)$ and $|\alpha'_n| \leq 1$ for every $n$. By comparing~\eqref{e:dalpha} and~\eqref{e:dc} and recalling that $\phi(0, \cdot) \equiv 0$ we get that $D^\ee_{\alpha_n} (\phi) \to D^\ee_c (\phi)$ as $n \to + \infty$. 
By passing to the limit in~\eqref{e:altalena} and using~\eqref{e:scivolo} we arrive at the inequality
\begin{equation*} \begin{split}
    D^\ee_c (\phi) &   \ge -
     \int_0^T  \! \!\int_\R  \! \! |   S_\ee (t, x) |  |\partial_x \phi| dx dt \\
     & - \iint_{\R_+ \times \R}   |\partial_x  \phi| (t,x) \underbrace{2   \essup_{w \in ]0, 1[} |V'(w)| \int_x^{+\infty}  \! \! \big| w_\ee(t,y) - w_\ee(t,x) \big|\frac1{\ee}\eta\left(\frac{x-y}{{\ee}}\right) dy}_{A_\ee(t, x)} dx dt.
  \end{split}\end{equation*}
We now set $E_\ee: = S_\ee + A_\ee $ and by recalling~\eqref{e:esse} point out that 
\begin{equation*} \label{e:inta} \begin{split}
    \int_\R | S_\ee (t, x) | dx & = \int_\R \int_x^{+\infty}\underbrace{|u_\ee (t, y)|}_{\leq 1\ \text{by \eqref{e:maxpu}}} \big|V(w_{\ee}(t,y)) - V (w_{\ee}(t,x)) \big|  \frac1{\ee}\eta\left(\frac{x-y}{{\ee}}\right) dy dx \\
   &\stackrel{\xi = \frac{x-y}{\ee}}{\leq}
  \int_\R \int_{\R_-} \big|V(w_{\ee}(t,x - \ee \xi )) - V (w_{\ee}(t,x)) \big| \eta (\xi) d\xi dx  \\
   & \stackrel{\text{Fubini}}{=}
   \int_{\R_-} \eta (\xi) \int_{\R} \big|V(w_{\ee}(t,x - \ee \xi )) - V (w_{\ee}(t,x)) \big| dx d\xi 
  \\ &
  \leq \essup_{w \in ]0, 1[} |V'(w) |  \int_{\R_-} \eta (\xi) \int_{\R} \big|w_{\ee}(t,x - \ee \xi ) - w_{\ee}(t,x) \big|  dx d\xi 
 \\& \stackrel{\eqref{e:tvchar}}{\leq}
 \ee   \essup_{w \in ]0, 1[} |V'(w) | \mathrm{TotVar}[  w_\ee] (t, \cdot) \underbrace{\int_{\R_-} \eta(\xi) |\xi| d \xi}_{C_\eta} 
 {\leq} C_\eta \ee \essup_{w\in ]0, 1[}| V'(w)| 
      \mathrm{TotVar}\,  w_\ee(t, \cdot).
\end{split}
\end{equation*}
By relying on the same computations we control the integral of $|A_\ee|$ and eventually arrive at~\eqref{e:inte}.  
\end{proof}
We now establish the proof of~\eqref{e:rate}. We follow an argument due to Kuznetsov~\cite{Kuznetsov}, which in turn relies on the doubling-of-variables technique by Kru{\v{z}}kov~\cite{Kruzkov}. We detail the argument for the sake of completeness. First, we apply Proposition \ref{p:quasisolutions} and recall that $u$ is an entropy admissibile solution of~\eqref{e:cl}; we conclude that for every test function $\phi \in C^\infty (({]0, +\infty[} \times \R)^2)$, $\phi \ge 0$ we have
\begin{equation}\label{e:doubling}
\begin{split}
 &\iint_{\R_+ \times \R}  \big[ |w_\ee(t',x') - u(t,x)| \partial_{t'} \phi + q(w_\ee(t',x'), u(t,x)) \partial_{x'} \phi \big]dx' dt' \ge  -  \iint_{\R_+ \times \R}  E(t',x')|\partial_{x'} \phi|dt'dx' \\
& \iint_{\R_+ \times \R}  \big[|w_\ee(t',x') - u(t,x)|\partial_{t} \phi  + q(w_\ee(t',x'), u(t,x)) \partial_{x} \phi \big]dx dt \ge ~ 0,
\end{split}
\end{equation}
provided $q(a,b) = \mathrm{sign}(a-b) \big( aV(a) - bV(b)\big)$.
We now choose the test function $\phi$ by setting 
\be \label{e:phi}
\phi(t,x,t',x') = \psi \left( \frac{t+t'}2\right) \chi \left( \frac{x+x'}2\right) \gamma_{\nu_1}(t-t') \gamma_{\nu_2}(x-x'),
\eq
 where $\psi \in C^\infty_c(]0, + \infty[)$, $\chi \in C^\infty_c (\R)$ satisfy $\psi,  \chi\ge 0$. Also, $\nu_1,\nu_2>0$ are two parameters and we have used the notation $\gamma_{\nu_i} (x): = \nu_i^{-1} \gamma (\nu_i^{-1}x)$, where $\gamma$ is a standard convolution kernel satisfying 
$$
   \gamma \in C^\infty_c (]-1, 1[), \quad \gamma \ge 0, \quad  \int_\R \gamma =1. 
$$
We plug~\eqref{e:phi} into~\eqref{e:doubling}, integrate the first equation with respect to $dx dt$ and the second equation with respect to $dx'dt'$ and add the resulting equations; we get
\be \label{e:iiii}
\iiiint_{(\R_+\times \R)^2} \! \! \! \big[ |w_\ee - u| \psi'\chi + q(w_\ee, u) \psi \chi' \big]\gamma_{\nu_1} \gamma_{\nu_2} dx' dt' dx dt \ge - \iiiint_{(\R_+\times \R)^2} 
   \! \! \! E^\ee(t',x') |\partial_{x'} \phi| dx' dt' dx dt,
\eq
where we used  the equalities $\partial_t \phi + \partial_{t'}\phi =  \psi'\chi  \ \gamma_{\nu_1} \gamma_{\nu_2}$
and $\partial_x \phi + \partial_{x'}\phi = \psi \chi '\ \gamma_{\nu_1} \gamma_{\nu_2}$. {In the previous expression and in the following if not otherwise specified the functions $\chi$ and $\chi'$ are evaluated at $(x+ x')/2$ and the functions $\gamma_{\nu_1}$ and $\gamma_{\nu_2}$ at $t-t'$ and $x-x'$, respectively. }
To control the right-hand side of~\eqref{e:iiii}, we first of all point out that 
\be \label{e:ddue}
         |\partial_{x'} \phi (t, x, t', x')| \leq \frac{1}{2}\| \psi \chi'  \|_{C^0}  \gamma_{\nu_1} \gamma_{\nu_2} + 
          \| \psi \chi \|_{C^0}  \gamma_{\nu_1} \frac{1}{\nu^2_2}\left|  \gamma' \left(\frac{x-x'}{\nu_2} \right) \right|.
\eq
Next, we choose $t_1$ and $t_2$ such that 
$\mathrm{supp} \ \psi \subseteq ]t_1, t_2[ $ and arrive at 
\be \label{e:stimae}
\begin{split}
\iiiint_{(\R_+\times \R)^2} &E^\ee(t',x') |\partial_{x'} \phi| dx dt dx' dt' \le  ~ \int_{t_1- \nu_1/2}^{t_2+ \nu_1/2} \int_\R E^\ee(t',x') 
          \left(\iint_{\R_+\times \R} |\partial_{x'} \phi| dx dt\right) dx'dt' \\
\stackrel{\eqref{e:ddue}}{\leq} &~  \int_{t_1- \nu_1/2}^{t_2+ \nu_1/2}
     \int_{\R} E^\ee(t',x') \left[\frac{1}{2}\| \psi \chi'  \|_{C^0}  + 
          \| \psi \chi \|_{C^0}  \frac{\| \gamma'\|_{L^1}}{\nu_2} \right] 
       dx'dt' \\
       \stackrel{\eqref{e:inte}}{\leq} &
       K \ee  [t_2 - t_1+\nu_1]  \left[ \frac{1}{2}\| \psi \chi'  \|_{C^0} + 
          \| \psi \chi \|_{C^0}  \frac{\| \gamma'\|_{L^1}}{\nu_2} \right]  \essup_{t \in \R_+} \mathrm{TotVar}\, w_\ee(t, \cdot)\\
           \stackrel{\eqref{e:tv},\eqref{e:tv0}}{\leq} &
         K   \ee  [t_2 - t_1+\nu_1]  \left[ \frac{1}{2}\| \psi \chi'  \|_{C^0}+ 
          \| \psi \chi \|_{C^0}  \frac{\| \gamma'\|_{L^1}}{\nu_2} \right]  \mathrm{TotVar}\, u_0.\end{split}
\eq
We now let $\nu_1 \to 0^+$ and then consider a sequence $\psi_n$ such that 
$$
    \psi_n \weaks  \mathbf 1_{]t_1,t_2[} \;\text{weakly$^\ast$ in $L^\infty (\R_+)$}, \quad 
    \| \psi_n \|_{C^0} \leq 1 
$$
and by taking the $n \to + \infty$ limit in both~\eqref{e:iiii} and~\eqref{e:stimae} and recalling that 
$w_\ee, u \in C^0 (\R_+, L^1_{\mathrm{loc}}(\R))$
we arrive at 
\be \label{e:maggio}
\begin{split}
\iint_{\R^2}& |w_\ee(t_2,x') - u(t_2,x)|\chi \gamma_{\nu_2} dx dx'  - \iint_{\R^2}|w_\ee(t_1,x') - u(t_1,x)|\chi \gamma_{\nu_2} dx dx' \\
 & \le
 \int_{t_1}^{t_2} \iint_{\R^2}  q(w_\ee, u)  \chi'  \gamma_{\nu_2} dx dx' dt 
  +  K \ee  [t_2 - t_1]  
          \left[ \frac{1}{2}\|  \chi'  \|_{C^0}+ 
          \| \chi \|_{C^0}  \frac{\| \gamma'\|_{L^1}}{\nu_2} \right]  \mathrm{TotVar}\, u_0.\end{split}
\eq
Assume $\| \chi \|_{C^0} \leq 1$; then we have
\be \label{e:giugno} \begin{split}
\iint_{\R^2}& |w_\ee(t_2,x') - w_\ee(t_2,x)|\chi \gamma_{\nu_2} dx dx'   \stackrel{\xi= \frac{x-x'}{\nu_2}}{\leq} 
\int_\R \gamma (\xi) \int_\R |w_\ee(t_2,x- \nu_2 \xi) - w_\ee(t_2,x)| dx d \xi \\&
\stackrel{\eqref{e:tvchar}}{\leq}  
C_\gamma \nu_2  \mathrm{TotVar}\, w_\ee(t_2, \cdot) 
   \stackrel{\eqref{e:tv},\eqref{e:tv0}}{\leq} C_\gamma \nu_2  \mathrm{TotVar}\, u_0
\end{split}
\eq
and analogously 
\be \label{e:luglio} \begin{split}
\iint_{\R^2}& |w_\ee(t_1,x') - w_\ee(t_1,x)|\chi \gamma_{\nu_2} dx dx' \leq C_\gamma \nu_2  \mathrm{TotVar}\, u_0.
\end{split}
\eq
We now point out that 
\be \label{e:ottobre}\begin{split}
\int_\R &  |w_\ee(t_2,x) - u(t_2,x)| \int_\R \chi \gamma_{\nu_2}  d x' dx =
\iint_{\R^2}  |w_\ee(t_2,x) - u(t_2,x)|  \chi \gamma_{\nu_2} dx dx' \\
& \leq 
  \iint_{\R^2}|w_\ee(t_2,x) - w_\ee(t_2,x')| \chi \gamma_{\nu_2} dx dx'  +
 \iint_{\R^2}|w_\ee(t_2,x') - u(t_2,x)| \chi \gamma_{\nu_2} dx dx' \\
\end{split}
\eq
and that 
\be \label{e:piscina}\begin{split}
 \iint_{\R^2} |w_\ee(t_1,x') - u(t_1,x)| \chi \gamma_{\nu_2} &dx dx' \leq 
 \iint_{\R^2}|w_\ee(t_1,x') - w_\ee(t_1,x)| \chi \gamma_{\nu_2} dx dx' \\ & +
  \int_\R   |w_\ee(t_1,x) - u(t_1,x)| \int_\R \chi  \gamma_{\nu_2}d x' dx.
  \end{split}
\eq
By plugging the above inequalities into~\eqref{e:maggio} and recalling~\eqref{e:giugno},~\eqref{e:luglio} and the inequality $\chi \leq 1$ we arrive at 
\[ \begin{split}
 \int_\R   &|w_\ee(t_2,x) - u(t_2,x)|  \int_\R \chi  \gamma_{\nu_2}  d x' dx \leq 2 C_\gamma \nu_2  \mathrm{TotVar}\, u_0 \\ 
& + 
\int_\R   |w_\ee(t_1,x) - u(t_1,x)| \int_\R \chi  \gamma_{\nu_2} d x' dx {+}  \int_{t_1}^{t_2} \iint_{\R^2}  q(w_\ee, u)  \chi'  \gamma_{\nu_2} dx dx' dt
 \\ & 
+  K \ee  [t_2 - t_1]  
          \left[ \frac{1}{2}\|  \chi'  \|_{C^0}+ 
          \frac{\| \gamma'\|_{L^1}}{\nu_2} \right]  \mathrm{TotVar}\, u_0 
\end{split}
\]
and by using the inequality
\be \label{e:novembre} \begin{split}
    \int_\R   |w_\ee(0,x) - u_0(x)| \int_\R & \chi  \gamma_{\nu_2}  d x' dx
    \leq  \int_\R   |w_\ee(0,x) -u_0(x)| dx \\ & \stackrel{\eqref{e:w}}{=}  \| u_0 \ast \eta_\ee - u_0 \|_{L^1}   
      \stackrel{\eqref{e:tvl1}}{\leq} C_\eta \ee \mathrm{TotVar}\, u_0
      \end{split}
\eq
and letting $t_1 \to 0^+$ we get 
\be \label{e:settembre} \begin{split}
 \int_\R   &|w_\ee(t_2,x) - u(t_2,x)|  \int_\R \chi  \gamma_{\nu_2}  d x' dx \leq 2 C_\gamma \nu_2  \mathrm{TotVar}\, u_0 \\ 
& + C_\eta \ee \mathrm{TotVar}\, u_0 + \int_{0}^{t_2} \iint_{\R^2}  q(w_\ee, u)  \chi'  \gamma_{\nu_2} dx dx' dt
+  K \ee  t_2   
          \left[ \frac{1}{2}\|  \chi'  \|_{C^0}+ 
          \frac{\| \gamma'\|_{L^1}}{\nu_2} \right]  \mathrm{TotVar}\, u_0.
\end{split}
\eq
Assume for a moment we have shown that 
\be \label{e:agosto}
      \int_\R   |w_\ee(t,x) - u(t,x)|  dx < + \infty \; \text{for every $t \in \R_+$};
\eq
then we can consider a sequence of test functions $\{ \chi_n \}$ such that $\chi_n \weaks 1$ weakly$^\ast$ in $L^\infty (\R)$, $0 \leq \chi_n \leq 1$ and $\| \chi'_n \|_{C^0} \to 0^+$ as $n \to +\infty$. By passing to the $n \to + \infty$ limit in~\eqref{e:settembre} we arrive at 
\[ \begin{split}
 \int_\R   &|w_\ee(t_2,x) - u(t_2,x)| dx \leq 2 C_\gamma \nu_2  \mathrm{TotVar}\, u_0 + C_\eta \ee \mathrm{TotVar}\, u_0 + K \ee  t_2  
                   \frac{\| \gamma'\|_{L^1}}{\nu_2}  \mathrm{TotVar}\, u_0 
\end{split}
\]
and by choosing $\nu_2 = \sqrt{\ee t_2}$ and by relying on the arbitrariness of $t_2$ we get~\eqref{e:rate}. We are thus left to establish~\eqref{e:agosto}: to this end, we recall that $q(a,b) = \mathrm{sign}(a-b) \big( aV(a) - bV(b))$, which implies that $|q(w_\ee, u) |\leq \essup_{w \in ]0, 1[} \{ |V(w)| + |V'(w)|\} |w_\ee (t, x) - u (t, x')|$. We can then choose a test function\footnote{Note that, technically speaking, the only compactly supported function satisfying the inequality $|\chi'| \leq \chi$ is the constant $0$. However, one can use an easy approximation argument and show that the inequality~\eqref{e:settembre} holds for instance for any function in the Schwartz space $\mathcal S (\R)$.} $\chi$ such that $|\chi'| \leq \chi$ and conclude that 
$$
    \int_{0}^{t_2} \iint_{\R^2}  q(w_\ee, u)  \chi'  \gamma_{\nu_2} dx dx' dt \leq \essup_{w \in ]0, 1[} \{ |V(w)| + |V'(w)|\}   \int_{0}^{t_2} \iint_{\R^2}  |w_\ee (t, x) - u (t, x')|  \chi \gamma_{\nu_2} dx dx' dt. 
$$
By controlling the above term with the same argument as in~\eqref{e:piscina}, applying Gr\"onwall's lemma and recalling~\eqref{e:novembre} we establish a bound on the left-hand side of~\eqref{e:settembre} independent of $\chi$, and this eventually implies~\eqref{e:agosto} by the arbitrariness of $t_2$.   
\section{Proof of Theorem~\ref{t:cex}} \label{s:cex}
In \S\ref{ss:out} we provide an overview of the construction of the counter-example, and we describe the basic ideas involved. Next, in \S\ref{ss:cexp},~\S\ref{ss:cexb} and \S\ref{ss:cexf} we detail the construction. 
Throughout this section we assume $V(w) = 1-w$, $\eta = \mathbbm{1}_{]-1, 0[}$, which implies that~\eqref{e:w} and~\eqref{e:wx} boil down to 
\be \label{e:wwx}
     w_\ee(t, x): = \frac{1}{\ee} \int_{x}^{x+ \ee} u(t, y) dy \quad \text{and} \quad      \frac{\partial w_\ee}{\partial x} (t, x) = \frac{u_\ee (t, x+\ee) - u_\ee(t, x)}{\ee},
\eq
respectively. Note that, since both $V$ and $\eta$ are now fixed, the construction of the counter-example boils down to the construction of the initial datum $u_0$. 
\subsection{Outline}\label{ss:out}
Our construction is reminiscent of the one in~\cite{ColomboCrippaMarconiSpinolo}, in particular we assume that $u_0 (x) =1$ for a.e. $x>0$, which implies that $u_\ee (t, x) =1$ for a.e. $(t, x)$ such that $x>0$. Also, as in~\cite{ColomboCrippaMarconiSpinolo} the total variation increase mechanism is triggered by a countable number of ``building blocks" located on the negative real axis. The building block we use here, however, is different from the one in~\cite{ColomboCrippaMarconiSpinolo}, and capturing the total variation increase mechanism requires a finer analysis. We now provide an handwaving description of the main ideas involved, and we refer to the following paragraphs for the rigorous argument. 

The basic building block is represented in Figure~\ref{f} and consists of two rectangles of height $h$ and length $\ell$, located at distance $2 \ell$ and $6 \ell$ from the origin, respectively.  
\begin{figure}
\begin{center}
\caption{In red the building block $v_{h \ell}$ triggering the total variation increase for $\ee= 4 \ell$ } 
\psfrag{a}{$\ee= 4 \ell$} 
\psfrag{b}{$-2 \ell$}
\label{f}
%\bigskip
\includegraphics[scale=0.6]{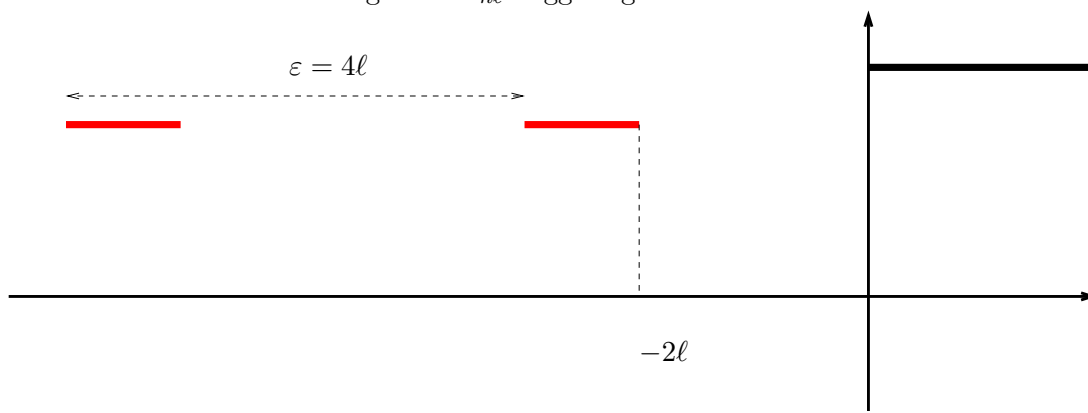}
\end{center}
\end{figure}
In other words, 
\be \label{e:bblock}
    v_{h \ell} (x) : = 
    \left\{
    \begin{array}{ll}
              0 & x <  - 7 \ell  \\
              h & - 7 \ell  < x <  - 6 \ell \\
              0  &  -6 \ell  < x < - 3 \ell \\
              h &  - 3 \ell < x < - 2 \ell \\
              0 & x > - 2 \ell \\
    \end{array}
   \right. \qquad \ell \in ]0, 1[, \; h \in ]0, 1[. 
\eq
We now fix $\ee >0$, set $u_0(x) = v_{h \ell} (x) + \mathbbm{1}_{]0, + \infty[}(x)$ and consider the Cauchy problem~\eqref{e:nlc}; let $w_\ee$ be as in~\eqref{e:w}, then it is fairly easy to see that
\be \label{e:limits}
     \lim_{x \to - \infty} w_\ee (t, x) =0, \quad \lim_{x \to + \infty} w_\ee (t, x) = 1 \quad \text{for every $t \ge 0$ and every $\ee>0$.}
\eq
Assume that $\ee = 4 \ell$; by using formula~\eqref{e:wwx} we realize (see equation~\eqref{e:vasu}) that $w_\ee (0, \cdot)$ is a monotone non-decreasing function and hence $\mathrm{Tot Var} \;
w_\ee (0, \cdot) =1$ by~\eqref{e:limits}. Assume for a moment that we have shown that, for some time $t>0$, we have $\partial_x w_\ee (t, \cdot) <0$ on some interval, then by using~\eqref{e:limits} we infer $\mathrm{Tot Var} \;w_\ee (t, \cdot)~>~1$, that is we have an increase of the total variation. 

To establish the inequality $\partial_x w_\ee (t, \cdot) <0$  we rely on~\eqref{e:wwx} and on the method of characteristics. More precisely, in the following we term $X_\ee(t, s, \xi)$ the solution of the Cauchy problem
\begin{equation}
\label{e:ch}
        \left\{
        \begin{array}{ll}
                     \displaystyle{\frac{dX_\ee}{dt}= 1- w_\ee (t, X_\ee)} \\ 
                     \displaystyle{X_\ee (s, s, \xi) = \xi, \phantom{\int}}
       \end{array}
        \right.
\end{equation}
in other words $X_\ee (\cdot, s, \xi)$ is the characteristic line that attains the value $\xi$ at time $t=s$. Note that the equation at the first line of~\eqref{e:nlc} implies that, if $u_0(x)=0$  then $u_\ee (t, X_\ee (t, 0, x))=0$ for every $t$, that is $u_\ee$ vanishes along the whole characteristic starting at $x$. Also, if  $u_0(y)>0$  then $u_\ee (t, X_\ee (t, 0, y))>0$ for every $t$. 
In particular this yields that if $x \in ]-2\ell, 0[$ and $y \in ]-7 \ell, -6 \ell[$ then  $u_\ee (t, X_\ee (t, 0, x))=0$ and $u_\ee (t, X_\ee (t, 0, y))>0$ for every $t$, respectively.   
The key observation is now that, since one can show that $w_\ee (0, - 6 \ell) < w_\ee (0, -2 \ell)$ then the initial speed of the characteristic $X_\ee (\cdot, 0, -6 \ell)$ is \emph{greater} than the initial speed of the characteristic $X_\ee (\cdot, 0, -2 \ell)$. Let us now consider a point $y$ just at the left of $- 6 \ell$, then $u_0(y) = h>0$ and hence $u_\ee (t, X_\ee (t, 0, y))>0$ for every $t$. On the other hand, let us consider the point $X_\ee (t, 0, y) + \ee$ and recall that $\ee = 4 \ell$: since the characteristic line $X_\ee (\cdot, 0, -2 \ell)$ is slower than $X_\ee (\cdot, 0, y)$, then the value $X_\ee (t, 0, y) + \ee$ overtakes $X_\ee (\cdot, 0, -2 \ell)$, and this in turn implies that $u_\ee (t, X_\ee (t, 0, y) + \ee)=0$. Summing up, we have $u_\ee (t, X_\ee (t, 0, y))>0$ and $u_\ee (t, X_\ee (t, 0, y) + \ee)=0$, and owing to~\eqref{e:wwx} this yields $\partial_x w_\ee (t, X_\ee (t, 0, y))<0$. 

To complete the construction we fix a sequence $\ee_n$, construct a corresponding sequence of building blocks $v_{h_n \ell_n}$ with $\ee_n = 4 \ell_n$, and juxtapose them on the negative real axis, in other words
\be \label{e:idcex}
    u_0(x) : = \sum_{n=1}^\infty v_{h_n \ell_n} (x) + \mathbbm{1}_{]0, + \infty[} (x), \quad \ell_n : = \ee_n /4.
\eq
The main obstruction we have now to tackle is that the previous considerations show that the building block $v_{h_n \ell_n}$ triggers a total variation increase at scale $\ee_n = 4 \ell_n$, but these considerations fail when $\ee_n \neq 4 \ell_n$. Hence, at a given scale $\ee_n$ it may in principle happen that the total variation \emph{increase} given by building block $v_{h_n \ee_n/4}$ is smaller than the total variation \emph{decrease} triggered  by the other building blocks in $u_0$, and that the overall effect is that the total variation decreases. To rule out this possibility, we  have to rely on a finer analysis and make the previous qualitative considerations quantitative. 
\subsection{Preliminary results} \label{ss:cexp}
We recall~\eqref{e:ch}. Note that, by the equation at the first line of~\eqref{e:nlc},  we have 
\begin{equation} \label{e:cmass}
    \int_{X_\ee (t, s, \xi_1)}^{X_\ee (t, s, \xi_2)} u_\ee (t, y) dy = 
    \int_{\xi_1}^{\xi_2} u_\ee(s, y) dy \quad \text{for every $s, t \in \R$, $\xi_1 < \xi_2$}, 
\end{equation}
that is the mass between two characteristic lines is always conserved. We now recall some basic properties established in~\cite{ColomboCrippaMarconiSpinolo}.
\begin{lemma}\label{l:old}
Fix $\ee >0$ and assume $u_0 \in L^1_\mathrm{loc}(\R)$ satisfies $0 \leq u_0 \leq 1$ and $u_0(x) =1$ for a.e. $x>0$. Then $u_\ee (t, x)=1$ for every $t\ge 0$ and a.e. $x>0$ and 
\begin{equation} \label{e:stali}
           X_\ee (t, 0, 0) =0 \quad \text{for every $t \ge 0$.}
\end{equation} 
Also, 
\begin{equation}
\label{e:adestra}
        \frac{dX_\ee}{dt}(t, s, \xi) \ge 0, \quad \text{for every $\xi \in \R$, $t, s \in \R$}. 
\end{equation}
\end{lemma}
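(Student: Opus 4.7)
The plan is to establish the three assertions of Lemma~\ref{l:old} by combining the maximum principle of Corollary~\ref{c:w}, a decoupling argument based on the fact that $\mathrm{supp}\,\eta \subseteq \R_-$, and the uniqueness statement of Proposition~\ref{p:exuni}. I would proceed in the order \emph{(iii)}, \emph{(i)}, \emph{(ii)}, since each step prepares the next.

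Claim \emph{(iii)} is immediate. By Corollary~\ref{c:w} one has $0 \leq w_\ee \leq 1$, and since $V(w) = 1 - w$ this yields $V(w_\ee) \geq 0$ everywhere; hence $\frac{dX_\ee}{dt}(t, s, \xi) = V(w_\ee(t, X_\ee(t, s, \xi))) \geq 0$ directly from~\eqref{e:ch}.

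For claim \emph{(i)}, the key observation is a decoupling property: since $\mathrm{supp}\,\eta \subseteq \R_-$, for any $x \geq 0$ the quantity $w_\ee(t, x) = \frac{1}{\ee}\int_x^{x+\ee} u_\ee(t, y)\,dy$ depends only on values of $u_\ee$ at points $y \geq 0$. I would construct a candidate global solution $\tilde u_\ee$ by imposing $\tilde u_\ee \equiv 1$ on $\R_+ \times \R_+$ and solving the induced nonlocal Cauchy problem on $\R_+ \times \R_-$ in which the convolution averages treat values at $y > 0$ as the constant $1$; existence for this restricted problem is obtained as in Proposition~\ref{p:exuni}. By construction, on $\{x > 0\}$ one has $\tilde w_\ee \equiv 1$ and hence $V(\tilde w_\ee) \equiv 0$, so the flux vanishes and the equation is trivially satisfied; on $\{x < 0\}$ it holds by definition. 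The one delicate point is the distributional equation across $x = 0$, and this is settled because $\tilde w_\ee(t, 0) = 1$ by continuity of $\tilde w_\ee$, so the flux $V(\tilde w_\ee) \tilde u_\ee$ vanishes on both sides of the origin and no singular contribution arises. Since $\tilde u_\ee$ matches $u_0$ at time $0$ by construction, uniqueness from Proposition~\ref{p:exuni} forces $u_\ee = \tilde u_\ee$, so $u_\ee \equiv 1$ on $\R_+ \times \R_+$.

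Finally, \emph{(ii)} is a corollary of \emph{(i)}: using $u_\ee(t, y) = 1$ for a.e.\ $y > 0$ in~\eqref{e:wwx} gives $w_\ee(t, 0) = 1$, hence $V(w_\ee(t, 0)) = 0$ for every $t \geq 0$. So $X(t) \equiv 0$ solves~\eqref{e:ch} with $s = \xi = 0$. Since by Corollary~\ref{c:w}\emph{(i)} the map $w_\ee$ is Lipschitz in $x$, the right-hand side of~\eqref{e:ch} is Lipschitz in $X$, and the Cauchy--Lipschitz theorem identifies $X(t) \equiv 0$ as the unique solution, yielding~\eqref{e:stali}. The main obstacle in the whole argument is assembling the gluing in \emph{(i)} with sufficient care to invoke global uniqueness; once the flux is shown to vanish across $x = 0$, the remaining verifications are routine.
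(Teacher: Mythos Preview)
The paper does not prove Lemma~\ref{l:old} itself; it merely recalls the statement from~\cite{ColomboCrippaMarconiSpinolo}, so there is no in-paper argument to compare against. Your proof is essentially correct and self-contained. Parts \emph{(iii)} and \emph{(ii)} are clean: the maximum principle $0\le w_\ee\le 1$ from Corollary~\ref{c:w} gives nonnegative characteristic speed since $V(w)=1-w$, and once $u_\ee\equiv 1$ on $\{x>0\}$ is known, $w_\ee(t,0)=1$ forces the characteristic through the origin to be stationary by Cauchy--Lipschitz.

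For part \emph{(i)} the decoupling/gluing strategy is the right idea, and your check that the flux $V(\tilde w_\ee)\tilde u_\ee$ vanishes on both sides of $x=0$ is exactly what makes the glued function a global distributional solution. The one step you gloss over is the existence theory for the ``induced nonlocal problem on $\R_-$'': Proposition~\ref{p:exuni} and its source~\cite{CocliteDNKeimerPflug} treat the full line, and a half-line problem with zero-flux boundary is not literally covered. A clean way to close this without a separate half-line theory is to pose instead a \emph{modified full-line} problem in which the convolution is computed against $\tilde v(t,y):=v(t,y)\mathbbm{1}_{\{y\le 0\}}+\mathbbm{1}_{\{y>0\}}$ rather than $v$. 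Existence and uniqueness for this fall under the same fixed-point machinery as Proposition~\ref{p:exuni}; the resulting velocity vanishes identically on $\{x\ge 0\}$, so characteristics from $\xi>0$ are frozen and the solution equals $1$ there; and since then $\tilde v=v$ everywhere, the modified equation coincides with~\eqref{e:nlc} and uniqueness yields $v=u_\ee$. This is your gluing, repackaged so that no half-line well-posedness needs to be invoked.
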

We also have 
\begin{lemma}
Under the same assumptions as in Lemma~\ref{l:old}, assume furthermore that there is $x_0 <0$ such that $u_0(x)=0$ for a.e. $x< x_0$. Then for every $\ee>0$ we have 
\begin{equation}
\label{e:pippo}
        w_\ee (t, x) =0 \; \text{for every $t >0$, $x< x_0 -\ee$}, \qquad  w_\ee (t, x) = 1
        \; \text{for every $t>0$, $x>0$}.
\end{equation}
\end{lemma}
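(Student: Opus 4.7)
The plan is to handle the two equalities in \eqref{e:pippo} separately; both reduce to pointwise information on $u_\ee$ combined with the integral representation \eqref{e:wwx}. The right-hand equality $w_\ee(t,x)=1$ for $x>0$ is immediate: by Lemma~\ref{l:old}, $u_\ee(t,y)=1$ for a.e.\ $y>0$, so for $x>0$ the interval $[x,x+\ee]$ lies entirely in $(0,+\infty)$ and \eqref{e:wwx} gives $w_\ee(t,x) = \frac{1}{\ee}\int_x^{x+\ee}1\,dy = 1$.

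For the left-hand equality, the key reduction is that if $u_\ee(t,y)=0$ for a.e.\ $y<x_0$ and every $t\ge 0$, then for $x<x_0-\ee$ the interval $[x,x+\ee]\subseteq(-\infty,x_0)$ and \eqref{e:wwx} gives $w_\ee(t,x)=0$. So the real work is to show that the zero region of $u_\ee$ propagates only to the right. By Corollary~\ref{c:w}, $w_\ee$ is Lipschitz on $\R_+\times\R$, hence the characteristic ODE \eqref{e:ch} generates a unique continuous flow $X_\ee(t,0,\cdot)\colon\R\to\R$; monotonicity of each individual characteristic in time follows from \eqref{e:adestra}, and uniqueness of the ODE prevents distinct characteristics from crossing, so $X_\ee(t,0,\cdot)$ is a nondecreasing homeomorphism of $\R$. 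For every $t\ge 0$ and every $y<x_0$ there is therefore a unique $y_0$ with $X_\ee(t,0,y_0)=y$, and \eqref{e:adestra} forces $y_0\le y<x_0$, so $u_0(y_0)=0$ by assumption.

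Finally, along any characteristic $u_\ee$ formally obeys the linear ODE $\frac{d}{dt}u_\ee(t,X_\ee(t,0,y_0)) = -u_\ee\, V'(w_\ee)\partial_x w_\ee$, so the vanishing of the initial value propagates to all times, yielding $u_\ee(t,y)=u_\ee(t,X_\ee(t,0,y_0))=0$ a.e. The one point that needs care is that $u_\ee$ is merely $L^\infty$, so this characteristic representation must be interpreted in a generalized sense; the expected (and mild) obstacle is precisely to legitimize the propagation of the vanishing set along the flow. This is however exactly the content of the renormalization argument already invoked in the proof of Proposition~\ref{p:exuni}(ii) in the spirit of DiPerna--Lions, which identifies the unique bounded distributional solution of \eqref{e:nlc} with the push-forward of $u_0$ under the Lipschitz flow $X_\ee(\cdot,0,\cdot)$. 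Once this standard representation is invoked, the desired vanishing $u_\ee(t,\cdot)\equiv 0$ a.e. on $(-\infty,x_0)$ follows, and \eqref{e:pippo} is established.
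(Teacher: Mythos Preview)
Your proposal is correct and follows the same approach as the paper: reduce both equalities in \eqref{e:pippo} to pointwise information on $u_\ee$ via \eqref{e:wwx}, use Lemma~\ref{l:old} for the right-hand equality, and use \eqref{e:adestra} (characteristics move to the right) together with the transport structure to conclude $u_\ee(t,\cdot)=0$ a.e.\ on $(-\infty,x_0)$ for the left-hand one. The paper's proof is much terser---it compresses your entire second and third paragraphs into the single clause ``Owing to~\eqref{e:adestra}, $u_\ee(t,x)=0$ for a.e.\ $x<x_0$''---but your expanded version, including the remark on renormalization for merely bounded $u_\ee$, is a faithful unpacking of what that clause is relying on.
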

\begin{proof}
Owing to~\eqref{e:adestra}, $u_\ee (t, x) =0$ for a.e. $x<x_0$, $t \ge 0$ and $\ee >0$. This yields the first equation in~\eqref{e:pippo}.  The second equation follows from the equality $u_\ee (t, x) =1$ for every $t \ge 0$ and a.e. $x >0$. 
\end{proof}
Note that~\eqref{e:pippo} yields~\eqref{e:limits}. 
\subsection{Analysis of the basic building block} \label{ss:cexb}
The following two lemmas deal with the case $\ell = 4 \ee$.
\begin{lemma}
\label{l:marzo}
Set 
\begin{equation}\label{e:u0cex}
    u_0 (x) =  v_{h \ell} (x) + s(x) + \mathbbm{1}_{]0, + \infty[} (x),
\end{equation}
where $v_{h \ell}$ is the same as in~\eqref{e:bblock} and $s \in L^\infty (\R)$ satisfies 
\begin{equation}
\label{e:esse2}      
       0 \leq s \leq 1, \qquad \text{$s(x) =0$ for a.e. $x \notin ]- \delta, 0[$}, \; \delta< 2 \ell. 
\end{equation} 
If  $\ee = 4 \ell$, then $\mathrm{Tot Var} \,  w_\ee (0, \cdot) =1$. 
\end{lemma}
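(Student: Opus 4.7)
My plan is to exploit the exact alignment between the two rectangles of the building block and the scale $\ee=4\ell$ to show that $w_\ee(0,\cdot)$ is monotone non-decreasing; combined with the limits $0$ and $1$ at $-\infty$ and $+\infty$ this immediately yields $\mathrm{TotVar}\, w_\ee(0,\cdot)=1$.

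The starting point is the differentiation formula from~\eqref{e:wwx}, which for $\ee=4\ell$ gives
\[
\partial_x w_\ee (0,x)=\frac{u_0(x+4\ell)-u_0(x)}{4\ell}.
\]
Thus the task reduces to proving the pointwise inequality $u_0(x+4\ell)\geq u_0(x)$ for a.e.\ $x\in\R$. I will verify this by a simple case analysis based on the piecewise description of $u_0$ coming from~\eqref{e:bblock},~\eqref{e:u0cex},~\eqref{e:esse2}. The key observation is that the centers of the two rectangles forming $v_{h\ell}$ are at distance exactly $4\ell$, so translating the left bump $]-7\ell,-6\ell[$ by $+4\ell$ lands it exactly on the right bump $]-3\ell,-2\ell[$, where $u_0$ again equals $h$; this makes $\partial_x w_\ee(0,\cdot)$ vanish on the translate of the left bump rather than becoming negative.

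The remaining cases are routine: for $x\in ]-3\ell,-2\ell[$ the translate $x+4\ell\in ]\ell,2\ell[$ lies in the region where $u_0=1\geq h$; for $x\in ]-6\ell,-3\ell[$ we have $u_0(x)=0$ and there is nothing to check; for $x<-7\ell$ the value $u_0(x)=0$ is trivially dominated; for $x\in ]-2\ell,0[$ (where $u_0\in\{0,s(x)\}\leq 1$, using $\delta<2\ell$ to ensure $s$ sits inside $]-2\ell,0[$) the translate satisfies $x+4\ell>2\ell>0$ so $u_0(x+4\ell)=1$; and for $x>0$ both sides are $1$. The only subtle point is the constraint $\delta<2\ell$, which is precisely what prevents the support of $s$ from overlapping the pre-image $]-6\ell,-3\ell[-4\ell=]-10\ell,-7\ell[$ or causing any mismatch—so the hypothesis~\eqref{e:esse2} is being used essentially.

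Once monotonicity is established, I combine it with~\eqref{e:pippo} (applied with the leftmost support point of $u_0-\mathbbm{1}_{]0,+\infty[}$, which is $-7\ell$): $w_\ee(0,x)=0$ for $x$ sufficiently negative and $w_\ee(0,x)=1$ for $x>0$. For a monotone non-decreasing bounded function the total variation equals the difference of the limiting values, hence $\mathrm{TotVar}\, w_\ee(0,\cdot)=1-0=1$, concluding the proof. I do not anticipate a genuine obstacle here; the lemma is really a bookkeeping check that the building block has been designed to produce a monotone initial convolution at the scale $\ee=4\ell$, and all work is in the elementary case-by-case verification.
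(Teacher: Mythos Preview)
Your proposal is correct and follows essentially the same route as the paper: both arguments use the formula $\partial_x w_\ee(0,x)=\ee^{-1}\big(u_0(x+\ee)-u_0(x)\big)$ with $\ee=4\ell$ and a case-by-case check to conclude that $w_\ee(0,\cdot)$ is non-decreasing, then combine this with the limits $0$ and $1$ at $\mp\infty$ to get total variation equal to $1$. The only cosmetic difference is that the paper records the piecewise values of $\partial_x w_\ee(0,\cdot)$ explicitly (formula~\eqref{e:vasu}) and invokes the identity $\mathrm{TotVar}\,w_\ee(0,\cdot)=1+2\int_\R[\partial_x w_\ee(0,\cdot)]^-\,dx$, whereas you phrase the same computation as the pointwise inequality $u_0(x+4\ell)\ge u_0(x)$.
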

\begin{proof}
We recall the formula 
\begin{equation}\label{e:formula}
     \mathrm{Tot Var} \,  w_\ee (t, \cdot) = \lim_{x \to + \infty} w_\ee (t, x) -  \lim_{x \to - \infty} w_\ee (t, x)
     + 2 \int_\R \left[ \frac{\partial w_\ee}{\partial x}  (t, x) \right]^- dx \stackrel{\eqref{e:limits}}{=} 
     1   + 2 \int_\R \left[ \frac{\partial w_\ee}{\partial x}  (t, x) \right]^- dx,  
\end{equation}
 where $[\cdot]^-$ denotes the negative part. Next, we point out that
\be \label{e:vasu}
    \frac{\partial w_\ee (0, x) }{\partial x}   = 
    \left\{ 
    \begin{array}{ll}
     0 &  x <  - 7 \ell - \ee  \\
     \displaystyle{\frac{h }{ \ee}  }&  - 7 \ell - \ee < x < - 7 \ell   \\
     0 & -7 \ell  < x < - 6 \ell \\
    \displaystyle{\frac{u_0 ( x+ \ee) }{ \ee}  }&  - 6 \ell < x < - 4 \ell \\
      \displaystyle{\frac{1 - u_0( x) }{ \ee} } & - 4 \ell  < x < 0 \\
    0 & x >0,
    \end{array}
  \right. 
\eq
which yields $\partial w_\ee (0, \cdot) / \partial x \ge 0$ and hence concludes the proof owing to~\eqref{e:formula}. 
\end{proof}
We now quantify the total variation increase triggered by the initial datum in~\eqref{e:u0cex}. 
\begin{lemma}
\label{l:maggio}
Under the same assumptions as in Lemma~\ref{l:marzo}, we have 
\begin{equation}
\label{e:aumenta}
        \mathrm{TotVar} \, w_\ee (t, \cdot) \ge 1 + 2 \left(\frac{2 - h}{4} \right) h  t + o(t) \quad t  \to 0^+.
\end{equation} 
\end{lemma}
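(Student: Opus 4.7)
\medskip

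\noindent\textbf{Proof proposal.} The plan is to apply the formula from the proof of Lemma~\ref{l:marzo},
\[
\mathrm{Tot Var}\, w_\ee(t, \cdot) = 1 + 2 \int_\R \left[ \partial_x w_\ee(t, x) \right]^- dx,
\]
which reduces the claim to a lower bound on the integral of order $t$ as $t \to 0^+$. By the discrete-derivative identity~\eqref{e:wwx}, $\partial_x w_\ee(t, x) = \ee^{-1}[u_\ee(t, x+\ee) - u_\ee(t, x)]$, so the task becomes exhibiting an $x$-interval $I_t$ of length of order $t$ on which $u_\ee(t, x)$ is close to $h$ while $u_\ee(t, x+\ee) = 0$.

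First I would compute $w_\ee(0, \cdot)$ at the relevant points via~\eqref{e:wwx} with $\ee = 4\ell$: the averaging window based at any $y \in (-7\ell, -6\ell)$ captures exactly one copy of the second bump, yielding $w_\ee(0, y) \equiv h/4$; the window based at $-2\ell$ covers $(-\delta, 0)$ (where $u_0 = s \ge 0$) and the full interval $(0, 2\ell)$ (where $u_0 = 1$), yielding $w_\ee(0, -2\ell) \ge 1/2$. Hence the characteristic speed $1 - w_\ee(0, \cdot)$ on the first bump exceeds the speed at the right edge of the second bump by at least $(2-h)/4 > 0$.

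Next I would use the method of characteristics of~\eqref{e:ch} to track the auxiliary curve $t \mapsto X_\ee(t, 0, y) + \ee$ against the characteristic $t \mapsto X_\ee(t, 0, -2\ell)$. For $y = -6\ell$ both start at $-2\ell$, but the former moves faster by at least $(2-h)/4$; consequently, for $y \in (-6\ell - \lambda(t), -6\ell)$ with $\lambda(t) = \frac{2-h}{4}\, t + o(t)$, the point $X_\ee(t, 0, y) + \ee$ lands in the characteristic image of the interval $(-2\ell, -\delta)$ (here we use $\delta < 2\ell$). Since $u_0 \equiv 0$ on $(-2\ell, -\delta)$ and since the ODE $\tfrac{d}{dt} u_\ee = u_\ee\, \partial_x w_\ee$ along characteristics (derived from the equation at the first line of~\eqref{e:nlc} using $V'=-1$) preserves the zero set of $u_\ee$, we obtain $u_\ee(t, X_\ee(t, 0, y) + \ee) = 0$ on this range. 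Symmetrically, the same ODE with initial value $h$ on $(-7\ell, -6\ell)$, combined with the bound $|\partial_x w_\ee| \le 1/\ee$ from~\eqref{e:maxz}, gives $u_\ee(t, X_\ee(t, 0, y)) = h + o(1)$ uniformly as $t \to 0^+$.

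Letting $I_t$ be the image of $(-6\ell - \lambda(t), -6\ell)$ under $X_\ee(t, 0, \cdot)$ and using that the Jacobian of the characteristic flow is $1 + o(1)$ for small $t$, I get $|I_t| = \lambda(t)(1 + o(1))$ and $\partial_x w_\ee(t, \cdot) = -h/\ee + o(1/\ee)$ on $I_t$; plugging into the total variation formula yields the stated lower bound on $\mathrm{Tot Var}\, w_\ee(t, \cdot)$. The hard part will be making the $o(t)$ errors rigorous: propagating $u_\ee$ along characteristics (where one must handle the time-dependence of the speed), controlling the Jacobian of the flow, and verifying that no competing region contributes to $[\partial_x w_\ee]^-$ in a way that could obscure the leading-order contribution from $I_t$; the Lipschitz estimates from Corollary~\ref{c:w} are the main tools for this control.
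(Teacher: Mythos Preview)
Your strategy is correct and mirrors the paper's: same total-variation formula, same comparison of characteristic speeds at $-6\ell$ versus $-2\ell$, same identification of the interval on which $u_\ee(t,x)>0$ while $u_\ee(t,x+\ee)=0$. The difference is only in the final integral. You propose to estimate $u_\ee(t,X_\ee(t,0,y))=h+o(1)$ via the ODE along characteristics and then multiply by $|I_t|$ computed through a Jacobian estimate; the paper instead writes $-\int_{I_t}\partial_x w_\ee\,dx=\ee^{-1}\int_{I_t} u_\ee(t,y)\,dy$ and uses the mass-conservation identity~\eqref{e:cmass} to convert this directly into $\ee^{-1}\int u_0$ over the corresponding initial interval, where $u_0\equiv h$. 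This single step replaces both of what you flag as the ``hard parts'' (propagating $u_\ee$ along characteristics and controlling the flow Jacobian), so it is worth adopting.

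One remark: your last worry, that some ``competing region'' might obscure the contribution from $I_t$, is unnecessary. You are proving a \emph{lower} bound on $\int_\R[\partial_x w_\ee]^-\,dx$, so restricting to $I_t$ already suffices; any additional region where $\partial_x w_\ee<0$ only improves the estimate.
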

\begin{proof}
We proceed according to the following steps. \\
{\sc Step 1:} we focus on $X_\ee (t, 0, - 2 \ell)$. Since 
$$
    w_\ee (0, - 2 \ell) \stackrel{\eqref{e:wwx},\eqref{e:u0cex}}{\ge }\frac{1}{\ee} \int_{ 0  }^{- 2 \ell + \ee} 1\ dy =
    1 - \frac{2 \ell}{\ee} \stackrel{\eqref{e:ch}}{\implies} \left. \frac{d X_\ee (t, 0, - 2 \ell)}{d t} \right|_{t=0}\leq   \frac{2 \ell}{\ee},
$$
then  
\begin{equation} \label{e:asinistra}
     X_\ee (t, 0, - 2\ell) \leq - 2 \ell + \frac{2 \ell}{\ee} t + o(t)
    \stackrel{\ee = 4 \ell}{=} 
    - 2 \ell + \frac{1}{2} t + o(t) \quad t \to 0^+. 
\end{equation}
{\sc Step 2:} we focus on $X_\ee (t, 0, - 6 \ell)$. Note that  
$$
    w_\ee (0, - 6 \ell) \stackrel{\eqref{e:bblock}}{=}\frac{h \ell}{\ee} \implies \left. \frac{d X_\ee (t, 0, - 6 \ell)}{d t} \right|_{t=0}= 1- \frac{h \ell}{\ee}, 
$$
and by recalling that $\ee = 4 \ell$ this yields 
\begin{equation} \label{e:luglio2}
    X_\ee (t, 0, - 6 \ell) + \ee = - 2 \ell + \left( 1 - \frac{h}{4} \right) t + o(t)\quad t \to 0^+. 
\end{equation}
{\sc Step 3:}  by comparing~\eqref{e:asinistra} and~\eqref{e:luglio2} we get that  
$
     X_\ee (t, 0, - 6 \ell) + \ee > X_\ee (t, 0, - 2 \ell)
$ for $t>0$ small enough and this in turn implies that 
$$
    X_\ee (0, t, X_\ee (t, 0, - 2\ell) - \ee)) \in ]- 7 \ell, - 6 \ell [
$$
for $t$ sufficiently small. Since for every $y \in  ]- 7 \ell, - 6 \ell [$ we have 
$$
    w_\ee (0, \cdot)=\frac{h \ell}{\ee}  \stackrel{\ee= 4 \ell}{=} 
     \frac{h }{4} \implies
   \left. \frac{d X_\ee (t, 0, y)}{d t} \right|_{t=0}= 1- \frac{h}{4} 
    \implies X_\ee (t, 0, y) = y+ \left( 1 - \frac{h }{4} \right) t + o(t)\quad t \to 0^+   
$$
then owing to~\eqref{e:asinistra} we get
\begin{equation} \label{e:ottobre2}
    X_\ee (0, t, X_\ee (t, 0, - 2\ell) - \ee) \leq  - 6\ell  +  \left( - \frac{1}{2}  + \frac{h }{4} \right)  t + o(t) 
    =   - 6\ell  +  \left( \frac{-2 + h}{4} \right)  t + o(t)  
\quad t \to 0^+. 
\end{equation}
{\sc Step 4:} since  $2\ell - \delta > 0$ by assumption, then $X_\ee (t, 0, -2 \ell)< 
X_\ee (t, 0, -6 \ell) + \ee < 
X_\ee (t, 0, -\delta)$ for sufficiently small $t>0$. On the other hand, $u_\ee (t, x) =0$ for every $x  \in ]X_\ee (t, 0, - 2 \ell), X_\ee (t, 0, - \delta)[$. \\
{\sc Step 5:} we point out that, for every $\xi \in ]-7 \ell, - 6 \ell[$ we have $u_\ee (t, X_\ee(t, 0, \xi))>0$. On the other hand, if $X_\ee (t, 0, \xi) + \ee \in ]X_\ee (t, 0, -2 \ell), X_\ee (t, 0, -\delta)[$, then by {\sc Step 4} we have $u_\ee (t, X_\ee (t, 0, \xi) + \ee) =0$ and hence 
\begin{equation} \label{e:lunedi}
    \frac{\partial w_\ee}{\partial x} (t, X_\ee(t, 0, \xi)) = 
   \frac{u_\ee (t, X_\ee (t, 0, \xi) + \ee) - u_\ee (t, X_\ee (t, 0, \xi) ) }{\ee} =
   -  \frac{u_\ee (t, X_\ee (t, 0, \xi) ) }{\ee} < 0. 
\end{equation}
{\sc Step 6:} by combining the previous steps we have that, if $\xi \in ] X_\ee (0, t, X_\ee (t, 0, -2 \ell) - \ee), - 6 \ell [$, then~\eqref{e:lunedi} holds true. This implies that 
\begin{equation*} \begin{split}
    \int_\R \left[ \frac{\partial w_\ee}{\partial x}  (t, x) \right]^- dx & \ge 
    - \int_{X_\ee (t, 0, -2\ell) -\ee}^{ X_\ee (t, 0, - 6 \ell)}\frac{\partial w_\ee}{\partial x}  (t, x) dx \stackrel{\eqref{e:lunedi}}{=
}    \frac{1}{\ee} \int_{X_\ee (t, 0, -2\ell) -\ee}^{ X_\ee (t, 0, - 6 \ell)} u_\ee (t, y) dy \\ &
    \stackrel{\eqref{e:cmass}}{=}
    \frac{1}{\ee} \int_{ X_\ee (0, t, X_\ee (t, 0, -2 \ell) - \ee)}^{ - 6 \ell} u_0 (t, y) dy  
     \stackrel{\eqref{e:ottobre2}}{\ge}
   \left(\frac{2- h}{4} \right) ht + o(t) \quad t\to 0^+
\end{split}
\end{equation*}
and by recalling~\eqref{e:formula} this yields~\eqref{e:aumenta}. 
\end{proof}
In the following lemma we consider the case $ \ell > \max\{\ee + \delta, 2 \ee\}$. 
\begin{lemma} \label{l:bfrombelow}
Assume $ \ell > \max\{\ee + \delta, 2 \ee\}$ and that $u_0$ is given by~\eqref{e:u0cex} with $s$ satisfying~\eqref{e:esse2}; then 
\begin{equation} \label{e:tevere}
    \int_{- 7 \ell - \ee }^{- 2 \ell}  \left| \frac{\partial w_\ee}{\partial x} (0, x)\right| dx  =
     4 h 
\end{equation}
and 
\begin{equation} \label{e:marzo}
   \int_{X_\ee (t, 0, - 7 \ell) - \ee}^{X_\ee (t, 0, - 2 \ell)} \left| \frac{\partial w_\ee}{\partial x} (t, x)\right|dx \ge  4 h   + o(t) \quad \text{as $t \to 0^+$}. 
\end{equation}
\end{lemma}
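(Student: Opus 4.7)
The plan is to treat~\eqref{e:tevere} by a direct evaluation of $|\partial_x w_\ee(0,\cdot)|$ and to treat~\eqref{e:marzo} by tracking five carefully chosen values of $w_\ee(t,\cdot)$ that are easy to compute via mass conservation. Throughout, I rely on the difference-quotient formula from~\eqref{e:wwx}, $\partial_x w_\ee(t,x) = \ee^{-1}\bigl(u_\ee(t,x+\ee)-u_\ee(t,x)\bigr)$, and on the mass-conservation identity~\eqref{e:cmass}.

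For~\eqref{e:tevere}, the initial datum~\eqref{e:u0cex} has four jumps of size $h$ coming from $v_{h\ell}$ at $c_1,\dots,c_4 := -7\ell,-6\ell,-3\ell,-2\ell$, plus possible jumps of $s$, which are confined to $]-\delta,0[$. The hypothesis $\ell>2\ee$ ensures that the ``jump windows'' $]c_i-\ee,c_i[$ are pairwise disjoint, while $\ell>\ee+\delta$ ensures that $]-2\ell,-2\ell+\ee[\subset]-2\ell,-\delta[$ so that the rightmost jump does not interact with $s$. On each such window $|\partial_x w_\ee(0,\cdot)|=h/\ee$, integrating to $h$ per jump and summing to $4h$.

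For~\eqref{e:marzo}, set $\xi_i(t):=X_\ee(t,0,c_i)$. Since $V'\equiv -1$, the ODE satisfied by $u_\ee$ along characteristics preserves the sign of the initial data; combined with~\eqref{e:cmass} this yields that, for $t$ small, $u_\ee(t,\cdot)$ vanishes on $(-\infty,\xi_1(t))\cup(\xi_2(t),\xi_3(t))\cup(\xi_4(t),X_\ee(t,0,-\delta))$ and is positive on $(\xi_1(t),\xi_2(t))\cup(\xi_3(t),\xi_4(t))$. Since $w_\ee(t,\cdot)$ is Lipschitz by Corollary~\ref{c:w}, $\int_a^b|\partial_x w_\ee|=\mathrm{TotVar}_{[a,b]}w_\ee(t,\cdot)$, which is bounded below by the sum of differences at any finite family of ordered sample points. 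I use the five points $\xi_1(t)-\ee<\xi_1(t)<\xi_2(t)<\xi_3(t)<\xi_4(t)$ (the ordering holds for small $t$ since $\xi_{i+1}(0)-\xi_i(0)\ge\ell>\ee$). Each value $w_\ee(t,x_*)=\ee^{-1}\int_{x_*}^{x_*+\ee}u_\ee(t,y)\,dy$ is rewritten via~\eqref{e:cmass} as an integral of $u_0$.

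The three ``zero'' values, at $\xi_1(t)-\ee$, $\xi_2(t)$, and $\xi_4(t)$, are \emph{exactly} zero for small $t$, because the corresponding regions in initial coordinates lie in $(-\infty,-7\ell)$, $(-6\ell,-3\ell)$ or $(-2\ell,-\delta)$ respectively (the last using $\ell>\ee+\delta$), where $u_0\equiv 0$. The two positive values $w_\ee(t,\xi_1(t))$ and $w_\ee(t,\xi_3(t))$ equal $h+o(t)$: setting $a_1(t):=X_\ee(0,t,\xi_1(t)+\ee)$, the identity $X_\ee(t,0,a_1(t))=X_\ee(t,0,-7\ell)+\ee$, combined with $w_\ee(0,\cdot)\equiv h$ on the entire plateau $[-7\ell,-6\ell-\ee]$ (which contains both $-7\ell$ and $-7\ell+\ee$ thanks to $\ell>2\ee$), forces $a_1(t)=-7\ell+\ee+o(t)$; since $u_0\equiv h$ on the small interval from $-7\ell$ to $a_1(t)$, this gives $w_\ee(t,\xi_1(t))=h+o(t)$, and analogously at $\xi_3(t)$. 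Summing the four consecutive differences yields $\mathrm{TotVar}_{[\xi_1(t)-\ee,\xi_4(t)]}w_\ee(t,\cdot)\ge 4h+o(t)$, which is~\eqref{e:marzo}. The main technical step is the $o(t)$ estimate for $a_1(t)$: rather than appealing to $C^1$ regularity of $\xi\mapsto X_\ee(t,0,\xi)$, I use the integrated form of the characteristic ODE together with uniform continuity of $w_\ee$ on the plateau to get $X_\ee(t,0,-7\ell+\ee)-X_\ee(t,0,-7\ell)=\ee+o(t)$, and then Lipschitz monotonicity of the forward flow transfers this into the desired bound on $a_1(t)$.
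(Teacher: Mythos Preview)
Your proof is correct and follows essentially the same strategy as the paper: for~\eqref{e:tevere} both compute $\partial_x w_\ee(0,\cdot)$ piecewise, and for~\eqref{e:marzo} both bound the total variation from below by sampling $w_\ee(t,\cdot)$ at points where it vanishes (characteristic images of $-7\ell$ shifted by $-\ee$, of $-6\ell$, and of $-2\ell$) and at points where it equals $h+o(t)$ (characteristic images of $-7\ell$ and $-3\ell$). The one genuine difference is in how you obtain $w_\ee(t,X_\ee(t,0,-7\ell))=h+o(t)$: the paper uses the material-derivative identity $\partial_t w_\ee+(1-w_\ee)\partial_x w_\ee=u_\ee(t,x+\ee)\,\ee^{-1}(w_\ee(t,x+\ee)-w_\ee(t,x))$ and observes that the right-hand side vanishes at $t=0$ because $w_\ee(0,\cdot)\equiv h$ on the plateau $[-7\ell,-6\ell-\ee]$, whereas you reach the same conclusion via mass conservation and the flow estimate $X_\ee(t,0,-7\ell+\ee)-X_\ee(t,0,-7\ell)=\ee+o(t)$. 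Both arguments exploit the same structural fact (the plateau of $w_\ee(0,\cdot)$), and your route avoids invoking the PDE for $w_\ee$ at the cost of a short flow computation; the paper's route is slightly shorter but requires recalling~\eqref{e:ewgen}.
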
 
\begin{proof}
By using the inequality $\ell- \delta > \ee$ we get  
$$
     \frac{\partial w_\ee}{\partial x} (0, x) = \frac{u_0 (x + \ee) - u_0 (x)}{\ee} =
    \left\{
     \begin{array}{ll}
     h/\ee& - 7 \ell - \ee< x < - 7 \ell \\
     0 & - 7 \ell < x < - 6 \ell - \ee \\
     -  h/\ee &  - 6 \ell - \ee < x < - 6 \ell \\ 
     0 & - 6 \ell < x < - 3 \ell - \ee \\
     h/\ee  & - 3 \ell - \ee < x < - 3 \ell \\
    0 & - 3 \ell < x < - 2 \ell - \ee \\
     - h/\ee & - 2 \ell - \ee < x < - 2 \ell  \\
    \end{array}
   \right. \qquad w_\ee (0, -7 \ell - \ee)=0= w_\ee (0, - 2\ell)
$$
and this yields~\eqref{e:tevere}.  We are left to establish~\eqref{e:marzo}. We point out that 
$$
    X_\ee (t, 0, -6\ell) + \ee < X_\ee (t, 0, - 3 \ell), \quad 
     X_\ee (t, 0, -2\ell) + \ee <   X_\ee (t, 0, - \delta)
$$
provided $t$ is sufficiently small. The above inequalities imply 
$$
    w_\ee (t, X_\ee (t, 0, -6\ell) ) = 0=w_\ee (t,  X_\ee (t, 0, -2\ell)) 
$$
and, since we also have  $w_\ee (t, X_\ee (t, 0, - 7 \ell) - \ee)=0= w_\ee (t, X_\ee (t, 0, - 3\ell) - \ee)$ for $t$ sufficiently small, this in turn yields 
\begin{equation}
    \label{e:danubio}
\begin{split}
 \int_{X_\ee (t, 0, - 7 \ell) - \ee}^{X_\ee (t, 0, - 2 \ell)}  \left| \frac{\partial w_\ee}{\partial x} (t, x)\right|dx & \ge 
      2\sup \{ w_\ee (t,  x), \; x \in ] X_\ee (t, 0, - 7 \ell) -\ee,  X_\ee (t, 0, - 6 \ell) [ \} \\
      & \quad + 
     2\sup \{ w_\ee (t,  x), \; x \in ] X_\ee (t, 0, - 3\ell) - \ee,  X_\ee (t, 0, - 2 \ell) [ \}  \\
     & \ge 2 w_\ee (t, X_\ee (t, 0, - 7 \ell) + 2   w_\ee (t, X_\ee (t, 0, - 3 \ell) \phantom{\int}
\end{split}
\end{equation}
for $t$ sufficiently small. To evaluate $w_\ee (t, X_\ee (t, 0, - 7 \ell))$ we point out that, since $\eta = \mathbbm{1}_{]-1, 0[}$ and $V(w) = 1-w$, the equation~\eqref{e:ewgen} for the material derivative of $w_\ee$ boils down to 
$$
      \partial_t w_\ee + [1-w_\ee] \partial_x w_\ee = u_\ee (t, x+ \ee) \frac{w_\ee (t, x+ \ee)-w_\ee (t, x) }{\ee}.
$$
Since $\ell > 2 \ee$, then $w_\ee(0, -7 \ell) = h=w_\ee (0, - 7 \ell + \ee)$ and hence by the above formula 
$$
   \left. \frac{d w_\ee (t,  X_\ee (t, 0, - 7 \ell))}{dt} \right|_{t=0} = 0 \implies 
   w_\ee (t,  X_\ee (t, 0, - 7 \ell)) = h + o(t) \quad t \to 0^+. 
$$
By analogous considerations we get $ w_\ee (t,  X_\ee (t, 0, - 3 \ell)) = h + o(t) \quad t \to 0^+$ and by plugging these equalities into~\eqref{e:danubio} we arrive at~\eqref{e:marzo}. 
\end{proof}
\subsection{Conclusion of the proof of Theorem~\ref{t:cex}} \label{ss:cexf}
We fix a sequence $\{ \ee_n \}$ satisfying~\eqref{e:ticino} and we take the same intial datum $u_0$ as in~\eqref{e:idcex}, where $\{h_n\}$ is any sequence such that $0 \leq h_n \leq 1$ and the series $\sum_{n=1}^\infty h_n$ converges. We now show that, for any $n \in \mathbb N$, we can find $t_n>0$ satisfying~\eqref{e:gennaio}. \\
{\sc Step 1:} if $n=1$ we set $s(x) : = \sum_{k=2}^\infty v_{h_k \ell_k} (x),$
which satisfies~\eqref{e:esse2} provided $\delta : =  8 \ell_2 < 2 \ell_1$. Since $\ell_2 = \ee_2/4$, $\ell_1= \ee_1/4$, this inequality boils down to $2 \ee_2 < \ee_1/2$, which is satisfied owing to~\eqref{e:ticino}. 
By combining Lemma~\ref{l:marzo} and Lemma~\ref{l:maggio} we then get that~\eqref{e:gennaio} holds true
for $n=1$ and some $t_1$ sufficiently small.  \\
{\sc Step 2:} we now fix $n>1$ and evaluate $\mathrm{Tot Var} \, w_{\ee_n} (t, \cdot)$ for $t=0$ and for $t \to 0^+$.  
The basic idea to do so is that we separately consider the contribution to the total variation of each of the first $n-1$ building blocks and then the contribution of all the remaining ones. It turns out that to each of the first $n-1$ building blocks  we can apply Lemma~\ref{l:bfrombelow} and hence conclude that for each of them  the total variation can decrease at most of $o(t)$. We can apply Lemma~\ref{l:marzo} and Lemma~\ref{l:maggio} to the remaining  blocks and conclude that their total variation increases of some factor proportional to $t$. By adding the two contributions, we arrive at~\eqref{e:gennaio}. We now provide the technical details. \\
{\sc Step 2A:} we show that each of the first $n-1$ evolve independently of the rest of the solution, at least for sufficiently small $t>0$. To this end, we point out that, for every $k=1, \dots, n-1$,
\be \label{e:potenzedidue}
    8 \ell_{k+1} + \ee_n = 2 \ee_{k+1} + \ee_n \stackrel{\eqref{e:ticino}}{<}
    \frac{1}{8} \ee_{k} + \frac{1}{16}  \ee_{k} = \frac{3}{16} \ee_{k} < 
    \frac{1}{4} \ee_k =  \ell_k  < 2 \ell_k.
\eq
This implies that $ - 7 \ell_{k+1} - \ee_n > - 2 \ell_k $ and hence that 
$X_{\ee_n} (t, 0, - 7 \ell_{k+1}) -\ee_n > X_{\ee_n} (t, 0, - 2 \ell_k) $ provided $t$ is sufficiently small. In particular, for any such $t$ we have 
\begin{equation}
\label{e:reno}
        \mathrm{Tot Var} \, w_{\ee_n} (t, \cdot) =
        \sum_{k=1}^{n-1}
        \int_{X_{\ee_n} (t, 0, -7 \ell_k) - \ee_n}^{ X_{\ee_n} (t, 0, - 2 \ell_k) }
    \left| \frac{\partial w_{\ee_n}}{\partial x}  (t, x) \right| dx 
    + \int_{X_{\ee_n} (t, 0, - 7 \ell_n) -\ee_n}^0 \left| \frac{\partial w_{\ee_n}}{\partial x}  (t, x) \right| dx.
\end{equation}
{\sc Step 2B:} we now fix $k=1, \dots, n-1$ and evaluate the $k$-th term in the sum in~\eqref{e:reno}. We set 
$
    s(x) : = \sum_{j=k+1}^\infty v_{h_j, \ell_j} (x)$
and $\delta : =  8 \ell_{k+1} $. Owing to~\eqref{e:potenzedidue} and to~\eqref{e:ticino} we have $\ell_k > \max\{ 2 \ee_n, \ee_n + \delta \}$. By repeating the same argument as in the proof of Lemma~\ref{l:bfrombelow} we then get 
\begin{equation}
\label{e:rodano}
          \int_{ -7 \ell_k - \ee_n}^{  - 2 \ell_k }
    \left| \frac{\partial w_{\ee_n}}{\partial x}  (0, x) \right| dx = 4 h_k, \qquad 
      \int_{X_{\ee_n} (t, 0, -7 \ell_k) - \ee_n}^{ X_{\ee_n} (t, 0, - 2 \ell_k) }
    \left| \frac{\partial w_{\ee_n}}{\partial x}  (t, x) \right| dx \ge 4h_k + o(t) \quad t \to 0^+. 
\end{equation}
{\sc Step 2C:} we control the second term in~\eqref{e:reno}. We set $
    s(x) : = \sum_{j=n+1}^\infty v_{h_j \ell_j} (x)$
and $\delta : =  8 \ell_{n+1} $ and point out that $\delta < 2 \ell_n$ by~\eqref{e:ticino}. 
By applying Lemma~\ref{l:marzo} and Lemma~\ref{l:maggio} we get 
\be \label{e:rio}
    \int_{ - 7 \ell_n -\ee_n}^0 \left| \frac{\partial w_{\ee_n}}{\partial x}  (0, x) \right| dx=1
    \qquad 
     \int_{X_{\ee_n} (t, 0, - 7 \ell_n) -\ee_n}^0 \left| \frac{\partial w_{\ee_n}}{\partial x}  (t, x) \right| dx=
   1 +  2 \left(\frac{2 - h_n}{4} \right) h_n  t  + o(t) \quad t  \to 0^+.
\eq
{\sc Step 2D:} we plug~\eqref{e:rodano} and~\eqref{e:rio} into~\eqref{e:reno} and conclude that
 $   \mathrm{Tot Var} \, w_{\ee_n} (0, \cdot) =4 \sum_{k=1}^{n-1} h_k +1$ and that 
$$   \mathrm{Tot Var} \, w_{\ee_n} (t, \cdot) \ge 4\sum_{k=1}^{n-1} h_k +1 +  2 \left(\frac{2 - h_n}{4} \right) h_n t + o(t) \quad t  \to 0^+,
$$
which establishes~\eqref{e:gennaio}.

\section*{Acknowledgments}
M.C. and E.M. are supported by the SNF Grant 182565. G.C. is supported by the ERC StG 676675 FLIRT. 
L.V.S. is a member of the PRIN 2020 Project 20204NT8W4 and of the GNAMPA  group of INDAM. 
\bibliographystyle{plain}
\bibliography{tv}
\end{document}